\documentclass[twoside,11pt,english]{article}
\usepackage{blindtext}
\usepackage{jmlr2e}
\usepackage[latin9]{inputenc}
\usepackage[letterpaper]{geometry}
\usepackage{babel}
\usepackage{float}
\usepackage{units}

\usepackage{mathtools}
\usepackage{amsmath} 
\let\proof\relax

\usepackage{amsthm}
\usepackage{amssymb}

\usepackage{breqn} 

\usepackage{xcolor}
\usepackage{multirow}
\usepackage{colonequals}
\usepackage{bm}
\usepackage{bbm}
\usepackage{multicol}
\allowdisplaybreaks

\usepackage{cleveref}

\usepackage[noend]{algpseudocode}
\usepackage{algorithm}
\usepackage{algorithmicx}

\usepackage{bold-extra}
\renewcommand{\bf}{\bfseries}
\renewcommand{\sc}{\scshape}
\renewcommand{\it}{\itshape}

\providecommand{\lemmaname}{Lemma}
\providecommand{\remarkname}{Remark}
\providecommand{\theoremname}{Theorem}
\theoremstyle{plain}
\newtheorem{thm}{\protect\theoremname}
\theoremstyle{plain}
\newtheorem{lem}[thm]{\protect\lemmaname}
\theoremstyle{remark}
\newtheorem{rem}[thm]{\protect\remarkname}

\usepackage[backend=biber, style=numeric, maxcitenames=2, sorting=nty]{biblatex}
\usepackage{acro}
\DeclareAcronym{ABIDE}{
  short=ABIDE,
  long=Autism Brain Imaging Data Exchange,
  tag = abbrev
}
\DeclareAcronym{KL divergence}{
  short=KL divergence,
  long=Kullback--Leibler divergence,
  tag = abbrev
}
\DeclareAcronym{kNN}{
  short=$k$NN,
  long=$k$-Nearest Neighbors,
  tag = abbrev
}
\DeclareAcronym{DFT}{
  short=DFT,
  long=Discrete Fourier Transform,
  tag = abbrev
}
\DeclareAcronym{FFT}{
  short=FFT,
  long=Fast Fourier Transform,
  tag = abbrev
}
\DeclareAcronym{KDE}{
  short=KDE,
  long=Kernel Density Estimation,
  tag = abbrev
}
\DeclareAcronym{FFTKDE}{
  short=FFTKDE,
  long=Fast Fourier Transform-based Kernel Density Estimation,
  tag = abbrev
}
\DeclareAcronym{AG}{
  short=AG,
  long=Accelerated Gradient,
  tag = abbrev
}
\DeclareAcronym{CG}{
  short=CG,
  long=Conjugate Gradient,
  tag = abbrev
}
\DeclareAcronym{GMRES}{
  short=GMRES,
  long=Generalized Minimal Residuals,
  tag = abbrev
}

\DeclareAcronym{SCAD}{
  short=SCAD,
  long=Smoothly Clipped Absolute Deviation,
  tag = abbrev
}
\DeclareAcronym{MCP}{
  short=MCP,
  long=Minimax Concave Penalty,
  tag = abbrev
}

\DeclareAcronym{PCA}{
  short=PCA,
  long=Principal Components Analysis,
  tag = abbrev
}

\DeclareAcronym{ICA}{
  short=ICA,
  long=Independent Component Analysis,
  tag = abbrev
}

\DeclareAcronym{GWAS}{
  short=GWAS,
  long=Genome-Wide Association Studies,
  tag = abbrev
}

\DeclareAcronym{GMM}{
  short=GMM,
  long=Generalized Method of Moments,
  tag = abbrev
}

\DeclareAcronym{MRI}{
  short=MRI,
  long=Magnetic Resonance Images,
  tag = abbrev
}
\DeclareAcronym{CRLB}{
  short=CRLB,
  long=Cramer--Rao Lower Bound,
  tag = abbrev
}
\DeclareAcronym{NYSE}{
  short=NYSE,
  long=New York Stock Exchange,
  tag = abbrev
}
\DeclareAcronym{NASDAQ}{
  short=NSADAQ,
  long=National Association of Securities Dealers Automated Quotations,
  tag = abbrev
}
\DeclareAcronym{BSM}{
  short=BSM,
  long=Black--Scholes--Merton,
  tag = abbrev
}
\DeclareAcronym{LASSO}{
  short=LASSO,
  long=Least Absolute Shrinkage and Selection Operator,
  tag = abbrev
}
\DeclareAcronym{SNR}{
  short=SNR,
  long=Signal-to-Noise Ratio,
  tag = abbrev
}

\ShortHeadings{MTE Distributions for Robust and Efficient
Sparse Learning}{Yang, Asgharian, and Greenwood}
\firstpageno{1}

\begin{document}
\title{Maximum Tsallis Entropy Distributions for Robust and Efficient Sparse Learning from Correlated Data}
\author{\name Kai Yang \email kai.yang2@mail.mcgill.ca \\
       \AND
       \name Masoud Asgharian \email masoud.asgharian2@mcgill.ca \\
       \AND
       \name Celia M.T. Greenwood \email celia.greenwood@mcgill.ca}
\editor{My editor}

\maketitle
\begin{abstract}
This paper addresses the limitations of Gaussian distribution assumptions
in statistical sparse learning, particularly in modeling correlated
and heterogeneous data. Conventional Gaussian models often lack
robustness towards outliers and underlying distribution assumptions.
To overcome these limitations, we propose the use of the $q$Gaussian
distribution, derived from Tsallis entropy maximization, as a robust
alternative. This is notably relevant in biostatistics, where the presence of correlated observations and heterogeneity, such as in genetic and longitudinal studies, is prevalent.  Our contributions include modeling of correlated data through the re-derived multivariate probability density function from Tsallis entropy maximization, thereby addressing the limitations inherent in conventional Gaussian models. 
Furthermore, we introduce a novel framework that adapts numerical
methods designed to find equilibria in flows to tackle composite
optimization problems prevalent in statistical sparse learning. Applying
this framework to the Hager-Zhang conjugate gradient algorithm \cite{Hager2005}, we
develop a numerically stable and efficient algorithm for sparse statistical
learning. The $q$Gaussian distribution, informed by the principle
of maximizing Tsallis entropy, presents a viable and flexible alternative
to Gaussian-based methods. 
This paper not only contributes to
the theoretical understanding of statistical distributions and optimization
techniques, but also paves the way for practical data analysis.
\end{abstract}

\begin{keywords}
  Statistical Learning, high--dimensional, Entropy, Robust, Optimization 
\end{keywords}

\section{Introduction }

In the realm of statistical sparse learning, the pursuit of robust
and efficient methodologies remains paramount, especially when confronted
with the complexities of correlated data. The principle of maximizing
Shannon's entropy stands as a pivotal framework that has led to the
derivation of nearly all frequently utilized statistical distributions
to date \cite{Cover2006}. This principle's application has notably
revealed that the multivariate Gaussian distribution maximizes Shannon's
entropy under the first two moments, 
significantly influencing the landscape of statistical sparse learning.
The Gaussian assumption has become a fundamental 
cornerstone of numerous statistical sparse learning problem formulations, and its core assumptions are rarely re-examined or challenged. 

However, the Gaussian distribution's features, particularly its exponential
tail decay and the absence of a shape parameter, can present substantial
limitations. Specifically, the lack of robustness towards outliers
and a limited capacity to accurately represent the distribution's
shape results in violations of the Gaussian assumption in statistical
modeling. Such violations have many practical repercussions, including
the potential for erroneous Type I error rates and the lack of robustness
towards distribution shape when estimating the dispersion parameter,
motivating the development of alternative approaches.

Dispersion or volatility parameters encapsulate critical and often
decisive information about distributions. Their estimations, specifically
in transformations of predicted outcomes, are often indispensable.
For instance, in the context of log-normal distributions, the mean
is directly influenced by the volatility parameter derived from the
underlying Gaussian distribution. Likewise, principles like the Law
of the Unconscious Statistician (LOTUS), which rely on accurate estimation
of volatility and precise understanding of the distribution's shape,
highlight the importance of determining this parameter for dependable
prediction and statistical modeling. 

Volatility estimation is of great importance in finance. Specifically, It\^o's lemma, often used in stochastic calculus for option pricing, explicitly highlights the importance of volatility's contribution. Delving into the realm of stochastic calculus, It\^o's lemma provides
a mathematical framework that elegantly captures volatility's impact
on dynamic systems. It\^o's lemma states that for a twice-differentiable
function $f$, 
\begin{equation}
df\left(t,X_{t}\right)=\left(\frac{\partial f}{\partial t}+\mu\frac{\partial f}{\partial x}+\frac{1}{2}\sigma^{2}\frac{\partial^{2}f}{\partial x^{2}}\right)dt+\sigma\frac{\partial f}{\partial x}dW_{t},\label{eq:Itos-chain-rule}
\end{equation}
where the term $\sigma^{2}\frac{\partial^{2}f}{\partial x^{2}}$ specifically
denotes the contribution of volatility to changes in the function
$f$. This mathematical representation is pivotal in finance, where
the phenomenon, termed \emph{volatility smile}, challenges the foundational
assumptions of the \ac{BSM} model, signaling empirical
deviations from expected normality in option pricing models. These
deviations have propelled the exploration of alternative distributions
capable of more accurately reflecting market realities \cite{Pena1999}.

In response to these limitations of Gaussian distributions, the $q$Gaussian
distribution, derived from the maximization of the Tsallis entropy, emerges as a compelling
alternative. The $q$Gaussian distribution is celebrated for its
flexibility in modeling the diverse shapes of bell-curved distributions,
including the ability to account for heavy-tailed distributions ---
a feature crucial for robust modeling of financial returns. It
provides a more accurate representation of financial returns on platforms
such as the \emph{\ac{NYSE}} and \emph{\ac{NASDAQ}} \cite{Borland2002,Borland2002a,Domingo2017}.
Despite its proven advantages in finance, the incorporation of Tsallis
entropy-maximizing distributions within the domain of statistical
sparse learning and biostatistics remains limited. To the best of
our knowledge, this paper represents the initial endeavor to apply
Tsallis entropy-maximizing distributions for biostatistical data modeling.
Correlated observations, frequently encountered in genetic and longitudinal
studies \cite{Garcia2017,Runcie2019,DandineRoulland2015}, as well
as heterogeneity of the variance, 
will be specifically addressed by our proposed Tsallis entropy-maximizing model for correlated data.

Hence, this paper advocates for the application
of the $q$Gaussian distribution in modeling correlated data within
sparse statistical learning frameworks. Our approach relaxes the
conventional reliance on normality assumptions;  
We aim to demonstrate that the intricate characteristics of $q$Gaussian
distributions can profoundly enhance the modeling of correlated data,
offering a robust and versatile alternative to conventional Gaussian-based
methods. 

Maximum likelihood estimation is one of the most commonly used estimation
techniques. However, the estimation process encounters notable computational
obstacles when dealing with high--dimensional and extra-large datasets.
Oracle penalties, favored for their efficacy in facilitating variable
selection, present an attractive yet complex solution to sparse learning
problems. However, oracle penalties are notable for their nonconvex and nonsmooth nature \cite{Nikolova2000},
which lead to considerable optimization challenges. Recently, proximal
methods have demonstrated an unmatched speed of convergence, thereby
surpassing most other approaches in efficiently handling estimation
in nonsmooth problems \cite{Hoheisel2020}. Simultaneously, the Krylov
subspace method, recognized among the top ten algorithms for computing in science and engineering 
of the twentieth
century, lays a solid foundation for numerical analysis. The conjugate
gradient method, a prominent member of the Krylov subspace methods,
has been applied extensively in various areas and is a fundamental
numerical tool in solving partial differential equations \cite{Nocedal2000}. While the nonlinear conjugate gradient performs exceptionally well in terms of its convergence speed and numerical stability, much better than accelerated gradient or gradient descent, its global convergence depends on the line--search step, whereas accelerated gradient and gradient descent methods do not necessarily require the line search step to achieve global convergence \cite{Ghadimi2015, Yang2024}.
Motivated by these methodologies, our paper introduces a proximal conjugate
gradient method that can be applied to solve $q$Gaussian sparse learning problems. This method aims to effectively combine the theoretical strengths of both proximal methods and Krylov subspace techniques. Additionally, our paper addresses the line search step needed for the proximal nonlinear conjugate gradient method to establish global convergence. 

We re-derive the probability density function for the multivariate
$q$Gaussian distribution from a Tsallis entropy maximizing perspective in Lemma \ref{lem:density-normalization}.
This derivation allows for a nuanced understanding and application
of this model in statistical analysis. Furthermore, \emph{our contributions in this paper are as following: }
\begin{enumerate}
\item \emph{We apply the derived 
density to model correlated and heterogeneous data effectively,
while carrying out the sparse statistical learning at the same time. }
\item \emph{Sparse statistical learning involves minimizing a composite
optimization problem, aimed at minimizing a composite objective function
composed of a globally Lipschitz-smooth term, which may be nonconvex,
and a convex nonsmooth term. A variety of numerical methods are available
to find equilibrium points for globally Lipschitz flows. By employing
the Moreau envelope and linearizing the smooth term, we develop a
framework that allows any numerical method designed for finding equilibrium
points in globally Lipschitz flows to be adapted into a numerical
optimization algorithm for minimizing the composite objective function.} 
\item \emph{Leveraging the framework introduced above, we implement it with the state-of-the-art Hager-Zhang conjugate gradient method \cite{Hager2005}. This implementation yields a proximal conjugate gradient algorithm that is not only computationally efficient but also numerically stable, suitable for a wide range of statistical sparse learning challenges. This includes the robust sparse learning approach we devised based on the concept of maximizing the Tsallis entropy distribution. }
\end{enumerate}
The structure of the paper is organized as follows: 

Section \ref{sec:Tsallis-Entropy} delves into the foundational properties
of Tsallis entropy, drawing upon previous literature to establish
a comprehensive background. Following this, Section \ref{sec:Tsallis-Entropy-Maximizing}
introduces the concept of $q-$moments. This section then elaborates
on employing Tsallis entropy maximizing distribution to effectively
model the $q-$correlation structure. In Section \ref{sec:Tsallis-Entropy-Maximizing},
we also re-derive the probability density function maximizing Tsallis
entropy under the first and second central $q-$moment constraints,
incorporating all relevant parameters for a likelihood-based approach
to statistical analysis. 

Our discussion transitions to the challenges and strategies of optimization
in Section \ref{sec:optimization}. This section is twofold; initially,
in Section \ref{subsec:variational-analysis}, we present essential background
knowledge from variational and nonsmooth analysis. This foundation
is critical for our novel contribution: the development of a proximal
framework to transform any first-order numerical optimization algorithm
to a proximal counterpart by leveraging the properties of the Moreau
envelope, detailed in Section \ref{subsec:PCG-framework}. In Section \ref{subsec:Proximal-HZ},
we apply this innovative framework to the state-of-the-art Hager-Zhang
conjugate gradient algorithm. This adaptation produces a proximal
version for tackling sparse statistical learning challenges. The efficacy
of this method is further showcased in Section \ref{sec:optimizing-penalized-mle},
where we outline the application of our proximal Hager-Zhang conjugate
gradient algorithm to optimize a penalized $q$Gaussian likelihood
function. This section also lays out a map from problem formulation
to the practical aspects of prediction using models trained with our
approach.
Finally, Section \ref{sec:Conclusion-and-Discussion}
synthesizes our contributions, offering a reflective conclusion and
proposing avenues for future research. 

\section{\label{sec:Tsallis-Entropy} Tsallis Entropy }

For an arbitrary random variable $X$, Shannon's Entropy \cite{Shannon1948}
poses the definition 
\begin{equation}
H\left(X\right)\coloneqq-\mathbb{E}\log\left(p\left(X\right)\right)=-\int\log\left(p\left(x\right)\right)d\mu_{X},\label{eq:shannon-entropy}
\end{equation}
where $p$ is the likelihood function for $X$. Over a given (likelihood)
function space 
\begin{equation}
    \mathcal{P}\coloneqq\left\{ p\left(x\right)\vert\forall x\in\mathcal{X},\ p\left(x\right)\geq0,\left|\mathbb{E}\log\left(p\left(X\right)\right)\right|<\infty\text{ and }\int_{\mathcal{X}}1d\mu_{X}=1\right\} ,
    \notag
\end{equation}
the Shannon's entropy is a \emph{strictly concave} function, which
implies uniqueness of the maximizer. Many commonly-used distributions
have been shown to maximize Shannon's entropy under certain given
constraints \cite{Cover2006}. For example, uniform distribution,
whether in discrete or continuous case
, are to maximize \eqref{eq:shannon-entropy} over a compact support,
with open sets defined by discrete or Euclidean topology, respectively.
The exponential distribution is defined as maximizing \eqref{eq:shannon-entropy}
over $\mathbb{R}_{\geq0}$ and with a constraint that the first moment
is a constant, $\frac{1}{\lambda}$; where $\lambda$ later turns
out to be the scale parameter. And the Gaussian distribution maximizes
\eqref{eq:shannon-entropy} over $\mathbb{R}$ with given mean and
variance. More examples can be given. For example, the constraint
to obtain a Laplace distribution is a given mean absolute deviation,
etc. 

\emph{Additivity} is a key element of Shannon's entropy. That is,
let $A_{1},A_{2}$ be two independent event sets, then the information
of the intersection $I\left(\mathbb{P}\left(A_{1}\cap A_{2}\right)\right)=I\left(\mathbb{P}\left(A_{1}\right)\cdot\mathbb{P}\left(A_{2}\right)\right)=I\left(\mathbb{P}\left(A_{1}\right)\right)+I\left(\mathbb{P}\left(A_{2}\right)\right)$
--- such homomorphism was considered particularly useful in Shannon's
view \cite{Shannon1948}. 
Later in the 1980s, \citeauthor{Tsallis1988} constructed an entropy
similar to Shannon's entropy but without the additivity property.
To see how Tsallis' entropy was developed, first we look at Tsallis'
$q-$exponential function, which is defined as $\exp_{q}:\mathbb{R}\mapsto\mathbb{R}$,
given by 
\begin{equation}
\exp_{q}x\coloneqq\begin{cases}
\left(\left(1+\left(1-q\right)x\right)\right)^{\frac{1}{1-q}}; & 1+\left(1-q\right)x>0\\
0; & \text{else}
\end{cases}\label{eq:tsalli-q-exp}
\end{equation}
For $q>1$, $\exp_{q}$ is bijective over $\left(0,\frac{1}{q-1}\right)$.
The inverse function, called the $q-$logarithmic function, is given
by 
\begin{equation}
\ln_{q}x\coloneqq\frac{x^{1-q}-1}{1-q}.\label{eq:tsalli-q-log}
\end{equation}
Based on this deformed $q-$exponential function, \cite{Tsallis1988}
developed \emph{Tsallis entropy} by replacing the $\log$ function
in \ref{eq:shannon-entropy} with $q-\log$ function \eqref{eq:tsalli-q-log}
and replacing the expectation with $q-$expectation operator \cite{Tsallis1988}:
\begin{align}
S_{q}\left(X\right) & =-\int_{\mathcal{X}}p^{q}\left(x\right)\ln_{q}p\left(x\right)dx\eqqcolon-\mathbb{E}_{q}\ln_{q}p\left(X\right)\label{eq:tsallis-entropy}\\
 & =\frac{1}{q-1}\left(1-\int_{\mathcal{X}}p^{q}\left(x\right)dx\right)
\end{align}
where $q\in\mathbb{R}\setminus\{1\}$ is a constant, and $\mathbb{E}_{q}f\left(X\right)\coloneqq\int_{\mathcal{X}}f\left(x\right)\cdot p^{q}\left(x\right)dx=\left\langle f\left(x\right),\left(d\mu_{X}\right)^{q}\right\rangle $
is referred to as the \emph{$q-$expectation operator}. Tsallis entropy
is also known as \emph{non-extensive} entropy; namely for arbitrary
independent two random variable $X_{1},X_{2}$: 
\begin{equation}
S_{q}(X_{1},X_{2})=S_{q}(X_{1})\oplus_{q}S_{q}(X_{2}),\label{eq:q-nonextensive}
\end{equation}
where ``$\oplus_{q}$'' is defined as $\forall a,b\in\mathbb{R}$,
\begin{equation}
a\oplus_{q}b\coloneqq a+b+\left(1-q\right)ab.
\end{equation}
Expectation has been used to characterize statistical distributions.
However, one significant drawback of the expectation (linear) operator
is the lack of continuity for some distributions; such as the Cauchy
distribution. Therefore, the $q-$expectation operator, $\mathbb{E}_{q}$,
provides robustness when characterizing the distributions in the real
domain. If the tail of the function vanishes at a rate of $O\left(\left(\log x\right)^{-1}\right)$, the function will not have a proper integral if the support
is unbounded. Thus, for any distribution whose likelihood function
is bounded in uniform norm, $\exists q\in\mathbb{R}_{>0}$ such that
$\mathbb{E}_{q}$ is continuous at the likelihood function in the
function space we are considering. 

\section{\label{sec:Tsallis-Entropy-Maximizing} Tsallis Entropy Maximizing
Distribution to Accommodate the $q-$Correlation Structure }

The Gaussian distribution maximizes the Shannon's entropy in the following
problem: 
\begin{align}
\max\ _{\phi\in \mathcal{P}} & -\int_{\mathbb{R}^{n}}\phi\left(x\right)\log\left(\phi\left(x\right)\right)dx\nonumber \\
\text{s.t. } & \phi\geq0;\nonumber \\
 & \int_{\mathbb{R}^{n}}\phi\left(x\right)dx=1;\nonumber \\
 & \int_{\mathbb{R}^{n}}x\cdot\phi\left(x\right)dx=0;\label{eq:shannon-gaussian-location}\\
 & \int_{\mathbb{R}^{n}}xx^{T}\cdot\phi\left(x\right)dx=\Sigma;\nonumber 
\end{align}
for some $n\in\mathbb{N}_{+}$ and $\Sigma\in\mathbb{R}^{n\times n},\ \Sigma\succ0$. For the sake of parsimony,
in \eqref{eq:shannon-gaussian-location} we assume that the distribution
is centered. To set the central trend parameter, or the mean parameter
in the specific case of the Gaussian distribution, the likelihood function
$\phi$ can be simply translated $x\mapsto x-\mu$ to incorporate
the parameter $\mu$ for the central trend. Entropy functions are invariant
under translation. 

Similarly to how the multivariate Gaussian distribution maximizes Shannon's
entropy in a Euclidean space, the multivariate $q$Gaussian distribution maximizes Tsallis entropy in a Euclidean space. Specifically, the
optimization problem is formulated as: 
\begin{align}
\max\ _{\phi\in L^{q}\left(\mathbb{R}^{n}\right)} & -\int_{\mathbb{R}^{n}}\phi^{q}\left(x\right)dx\label{eq:generalized-entropy-obj}\\
\text{s.t. } & \phi\geq0;\nonumber \\
 & \int_{\mathbb{R}^{n}}\phi\left(x\right)dx=1;\label{eq:normalization-constraint}\\
 & \frac{\int_{\mathbb{R}^{n}}x\cdot\phi^{q}\left(x\right)dx}{\int_{\mathbb{R}^{n}}\phi^{q}\left(x\right)dx}=0;\label{eq:q-first-moment-constraint}\\
 & \frac{\int_{\mathbb{R}^{n}}xx^{T}\cdot\phi^{q}\left(x\right)dx}{\int_{\mathbb{R}^{n}}\phi^{q}\left(x\right)dx}=\Sigma,\label{eq:q-second-moment-constraint}
\end{align}
where $q>1$. The feasible set of Lebesgue space $L^{q}\left(\mathbb{R}^{n}\right)$ is to ensure the well--definedness of Tsallis entropy. The normalization constraint \eqref{eq:normalization-constraint} implies that $\phi\in L^{1}$; however, $\phi\in L^{1}$ does not imply $\phi\in L^{q}$, as the embedding property $L^{1} \subseteq L^{q}$ fails to hold for Lebesgue measure on $\mathbb{R}^{n}$. As an example, consider the one-dimensional example of the probability density function 
\begin{equation}
    \tilde{\phi}\left(x\right)=\begin{cases}
\frac{1}{4}\left|x\right|^{-\frac{1}{2}} & \text{for }x\in\left(-1,1\right)\setminus\left\{ 0\right\} ;\\
0 & \text{else.}
\end{cases}
\end{equation}
Clearly, $\tilde{\phi}\in L^{1}$ but $\tilde{\phi}\not\in L^{2}$. Note that \eqref{eq:q-first-moment-constraint} and
\eqref{eq:q-second-moment-constraint} are the first and second moment
constraints using the $q-$expectation operator $\mathbb{E}_{q}$. As
noted by \cite{Tsukada2005}, maximizing any member of the generalized
class of power-law entropies, including Renyi entropy, Havrda and
Charvat entropy, Arimoto entropy, and Tsallis entropy, all yield the
identical power-law objective function \eqref{eq:generalized-entropy-obj}.
Regarding the constraints, $\int_{\mathbb{R}^{n}}\phi^{q}\left(x\right)dx$
is the normalization factor for the $q-$expectation. \cite{Tsukada2005}
further noted that optimizing the problem formulated above is equivalent
to the following problem: 
\begin{align}
\max\ _{\varphi\in L^{s}\left(\mathbb{R}^{n}\right)} & \int_{\mathbb{R}^{n}}\varphi^{s}\left(x\right)dx\label{eq:generalized-entropy-moment-obj}\\
\text{s.t. } & \varphi\geq0;\nonumber \\
 & \int_{\mathbb{R}^{n}}\varphi\left(x\right)dx=1;\nonumber \\
 & \int_{\mathbb{R}^{n}}x\cdot\varphi\left(x\right)dx=0;\nonumber \\
 & \int_{\mathbb{R}^{n}}xx^{T}\cdot\varphi\left(x\right)dx=\Sigma.\nonumber 
\end{align}
In \eqref{eq:generalized-entropy-moment-obj}, $s\coloneqq q^{-1}\in\left(0,1\right)$, thus $L^{s}\left(\mathbb{R}^{n}\right)$ is a quasi-normed space;
$\varphi\left(x\right)\coloneqq\frac{\phi^{q}\left(x\right)}{\int_{\mathbb{R}^{n}}\phi^{q}\left(x\right)dx}.$
If the maximizer of \eqref{eq:generalized-entropy-moment-obj} is
$\varphi$, then the maximizer of \eqref{eq:generalized-entropy-obj},
$\phi$, will be normalized 
\begin{equation}
\phi\left(x\right)\propto\varphi^{1/q}.
\end{equation}
Several important properties were proposed previously
regarding the $q$Gaussian distributions in previous studies \cite{Vignat2004,Costa2003}.
Notably, 
\begin{enumerate}
\item \label{enu:maximizing-format} Using Bregman information divergence,
Problem \eqref{eq:generalized-entropy-moment-obj} has a unique maximizer
of the form 
\begin{equation}
\varphi\left(x;s\right)=A_{s}\left(1-\left(s-1\right)\beta^{\prime}\left\langle x,\Sigma^{-1}x\right\rangle \right)_{+}^{\frac{1}{s-1}}\label{eq:dual-maximizer}
\end{equation}
for some $s\in\left(\frac{n}{n+2},\infty\right)\setminus\left\{ 1\right\} $,
normalization constant $A_{r}$, and some dispersion parameter $\beta^{\prime}$. 
\item \label{enu:linear-mapping-invariance} If $X\sim q\text{Gaussian}\left(q,\Sigma\right)$,
$H\in\mathbb{R}^{\tilde{n}\times n}$ and $\text{rank}\left(H\right)=\tilde{n}$.
Then $\tilde{X}\sim q\text{Gaussian}\left(\tilde{q},H\Sigma H^{T}\right)$
with 
\begin{equation}
\frac{2}{1-\tilde{q}^{-1}}-\tilde{n}=\frac{2}{1-q^{-1}}-n.\label{eq:linear-mapping-invariance}
\end{equation}
 
\item \label{enu:linear-combination-not-closed} If $X_{1},X_{2}$ are both
$q$Gaussian random vectors but independent, a linear combination
of $H_{1}X_{1}+H_{2}X_{2}$ is {\em not} $q$Gaussian. 
\item \label{enu:heavy-tail-bounded-support-duality} The duality property:
if $X\sim q\text{Gaussian}\left(q,\Sigma\right)$ with $1<q<1+\frac{2}{n}$,
let the degree of freedom for $X$ be $m\coloneqq\frac{2}{q-1}-n$
and $\Lambda\coloneqq m\Sigma$, then 
\begin{align*}
\frac{X}{\sqrt{1-\left\langle X,\Lambda^{-1}X\right\rangle }} & \sim q\text{Gaussian}\left(\tilde{q},\frac{m}{m+4}\Sigma\right)\\
\text{with }\frac{1}{\tilde{q}^{-1}-1} & =\frac{1}{1-q^{-1}}-\frac{n}{2}-1,
\end{align*}
and $0<\tilde{q}<1$. 
\end{enumerate}
Property \ref{enu:maximizing-format} will be used in our Lemma \ref{lem:density-normalization}.
Property \ref{enu:linear-mapping-invariance} implies that any components of a $q$Gaussian random vector are also $q$Gaussian, while Property \ref{enu:linear-combination-not-closed}
implies that two independent $q$Gaussian vectors are not jointly
$q$Gaussian. 

By the equivalence of problems \eqref{eq:generalized-entropy-obj}
and \eqref{eq:generalized-entropy-moment-obj} discussed before, \eqref{eq:dual-maximizer}
can be rewritten as 
\begin{equation}
\phi\left(x;q,\Sigma\right)=\left(\alpha-\beta\left\langle x,\Sigma^{-1}x\right\rangle \right)_{+}^{\frac{1}{1-q}}\label{eq:parameter-to-be-found}
\end{equation}
for some constant (parameter) $\alpha,\beta$, $q\in\left(0,1+\frac{2}{n}\right)\setminus\left\{ 1\right\} $;
$x_{+}\coloneqq\max\ \left(0,x\right)$. As shown later in the proof
of Lemma \ref{lem:density-normalization}, the dimension-related
upper bound $1+\frac{2}{n}$ is due to the normalization constraint
\eqref{eq:normalization-constraint}. When $0<q<1$, the density represents
a distribution with bounded support; when $q>1$, the density is a
generalization of the bell curve distributions, and with $q\searrow1$ the
Gaussian distribution is recovered. A higher value of $q$ corresponds
to heavier tails in shape. The duality between the $q$Gaussian random vectors
with $0<q<1$ and $1<q<1+\frac{2}{n}$ was given by \cite{Vignat2004},
which we discussed in Property \ref{enu:heavy-tail-bounded-support-duality} in Section \ref{sec:Tsallis-Entropy-Maximizing}.
Distributions with bounded support correspond to $0<q<1$, and distributions
with heavy tails correspond to $1<q<1+\frac{2}{n}$. For the scope
of this paper, we will focus only on the heavy-tail distributions;
i.e., the case when $q>1$. \cite{Vignat2009} derived the $q$Gaussian
probability density function for $1<q<\frac{n+4}{n+2}$, when the
multivariate $q$Gaussian density becomes the scaled density of the multivariate
student's $t$ distribution. To incorporate the case of $q\in[1+\frac{2}{n+2},1+\frac{2}{n})$,
when the variance does not exist but the $q-$variance can be used
to capture the volatility/dispersion of the data, \cite{Vignat2007b}
also derived the resulting density; however, since a typo was found
in that paper, we re-derive the density in Lemma \ref{lem:density-normalization}.
The parameters presented in the density formula \eqref{eq:$q$Gaussian-heavy-tail-density}
are of particular interest to statisticians, as parameter inference
is the key to statistical analysis and prediction. The case of $q\in[1+\frac{2}{n+2},1+\frac{2}{n})$
will allow the resulting $q$Gaussian distribution to incorporate
the wider class of distributions without finite moments but finite
$q-$moments; such as the Cauchy distribution. Therefore, modeling using the $q$Gaussian distribution with $q$ allowed to take the value in $[1+\frac{2}{n+2},1+\frac{2}{n})$
will be more robust. 
\begin{lem}
\label{lem:density-normalization} When $q\in\left(1,1+\frac{2}{n}\right)$, the unique solution to \eqref{eq:generalized-entropy-obj}
is: 
\begin{equation}
p\left(x;q,\Sigma\right)=\frac{1}{\left|\pi\Sigma\right|^{\nicefrac{1}{2}}}\cdot\frac{\Gamma\left(\frac{1}{q-1}\right)}{\Gamma\left(\frac{1}{q-1}-\frac{n}{2}\right)}\cdot\left(\frac{2}{q-1}-n\right)^{-\frac{n}{2}}\cdot\left(1+\left(\frac{2}{q-1}-n\right)^{-1}\cdot\left\langle x,\Sigma^{-1}x\right\rangle \right)^{\frac{1}{1-q}}.\label{eq:$q$Gaussian-heavy-tail-density}
\end{equation}
\end{lem}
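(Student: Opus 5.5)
The plan is to use the structural result already quoted (Property \ref{enu:maximizing-format}, together with the equivalence of \eqref{eq:generalized-entropy-obj} and \eqref{eq:generalized-entropy-moment-obj}) for existence, uniqueness and the functional form of the maximizer. The lemma then reduces to identifying the two free constants in \eqref{eq:parameter-to-be-found} from the constraints.

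\textbf{Step 1: fix the form of the maximizer.} For $q>1$ the exponent $\frac{1}{1-q}$ is negative. A function $(\alpha-\beta\langle x,\Sigma^{-1}x\rangle)_+^{1/(1-q)}$ can then be integrable and finite on $\mathbb{R}^n$ only if $\alpha>0$ and $\beta<0$. Otherwise it either blows up on the boundary of its support, or its support is bounded and the positive part is inconsistent with the heavy-tailed regime. So I will write the maximizer as
\[
\phi(x)=C\bigl(1+k\langle x,\Sigma^{-1}x\rangle\bigr)^{-a},\qquad a\coloneqq\tfrac{1}{q-1},\quad C,k>0 .
\]
Uniqueness is inherited directly from the uniqueness statement in Property \ref{enu:maximizing-format}, transported through the bijection $\varphi\propto\phi^{q}$. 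Constraint \eqref{eq:q-first-moment-constraint} holds automatically, because $\phi$ is even in $x$.

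\textbf{Step 2: the basic integral.} I will substitute $y=\Sigma^{-1/2}x$ (Jacobian $|\Sigma|^{1/2}$), pass to polar coordinates and use the Beta integral to obtain, for $b>n/2$,
\[
\int_{\mathbb{R}^n}(1+k|y|^2)^{-b}\,dy=\Bigl(\frac{\pi}{k}\Bigr)^{n/2}\frac{\Gamma(b-\frac n2)}{\Gamma(b)}.
\]
Taking $b=a$ in the normalization \eqref{eq:normalization-constraint} requires $a>n/2$, i.e.\ $q<1+\frac2n$; this is exactly where the upper bound comes from. It also gives
\[
C=|\pi\Sigma|^{-1/2}\,k^{n/2}\,\frac{\Gamma(a)}{\Gamma(a-\frac n2)}.
\]

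\textbf{Step 3: the $q$-second-moment constraint.} We have $\phi^q\propto(1+k|y|^2)^{-(a+1)}$, since $q/(q-1)=a+1$. By rotational symmetry the $q$-covariance of $y$ is a multiple of the identity, so $\Sigma$ is reproduced precisely when
\[
\frac{\int|y|^2(1+k|y|^2)^{-(a+1)}\,dy}{\int(1+k|y|^2)^{-(a+1)}\,dy}=n .
\]
Both integrals reduce to Beta functions. The numerator is finite exactly when $a+1-\frac n2-1>0$, which is again $q<1+\frac2n$; this is why the $q$-moment exists even when the ordinary variance does not. The ratio equals $\frac{n}{2k(a-n/2)}$. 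Setting it equal to $n$ gives
\[
k=\frac{1}{2a-n}=\Bigl(\frac{2}{q-1}-n\Bigr)^{-1}.
\]
Substituting this $k$ into $C$ yields exactly the prefactor and kernel in \eqref{eq:$q$Gaussian-heavy-tail-density}.

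\textbf{Main obstacle.} The delicate step is Step 3: the radial Beta-function bookkeeping has to be done carefully, since it is easy to slip the exponent from $a$ to $a+1$ or drop a factor of $2$. This step is also where the cited density in \cite{Vignat2007b} apparently contains a typo. A lesser obstacle is justifying the sign choice in Step 1 and the transfer of uniqueness between the two formulations. For that I would rely on the strict concavity of the $L^{s}$ objective with $s<1$, as asserted in Property \ref{enu:maximizing-format}, rather than reprove it.
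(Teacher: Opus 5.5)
Your proposal is correct and is essentially the paper's proof: you take the form of the maximizer from Property~\ref{enu:maximizing-format}, normalize it via $y=\Sigma^{-1/2}x$, polar coordinates and the Beta integral, and then fix the scale from the $q$-second-moment constraint. Your $k$ is exactly $1/\gamma$ for the paper's $\gamma$ in \eqref{eq:target-maximizer}, and your values of $C$, of the ratio $\frac{n}{2k(a-n/2)}$ and of $k=\left(\frac{2}{q-1}-n\right)^{-1}$ are all right.

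The differences are minor and in your favour. Your rotational-symmetry reduction, used where the paper applies $\text{tr}(\Sigma^{-1}\,\cdot)$, also shows that the scalar condition is sufficient for the full matrix constraint. Your transfer of uniqueness from \eqref{eq:generalized-entropy-moment-obj} through $\phi\mapsto\phi^{q}/\int\phi^{q}$ is sounder than the paper's appeal to strict concavity, because the feasible set defined by the $q$-moment constraints is not convex in $\phi$. Your Step~1 sign discussion is superfluous: with $s=1/q<1$, the form in Property~\ref{enu:maximizing-format} already has a base that is positive everywhere.
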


\begin{proof}
By \eqref{eq:dual-maximizer} and the equivalence of the problems \eqref{eq:generalized-entropy-obj}
and \eqref{eq:generalized-entropy-moment-obj}, let the solution to
\eqref{eq:generalized-entropy-obj} be denoted by 
\begin{equation}
p\left(x;q,\Sigma\right)=\frac{1}{Z}\left(\gamma+\left\langle x,\Sigma^{-1}x\right\rangle \right)^{\frac{1}{1-q}}\label{eq:target-maximizer}
\end{equation}
for some $Z,\gamma>0$. Feasibility for problem \ref{eq:generalized-entropy-obj} when $q\in\left(1,1+\frac{2}{n}\right)$
was given in \cite{Vignat2004}. Hence, the strictly concavity of
the objective function \ref{eq:generalized-entropy-obj} implies that the optimal solution is unique. The symmetry of $p\left(x;q,\Sigma\right)$
is implied by \eqref{eq:target-maximizer}; thus, we reformulate the
problem \eqref{eq:generalized-entropy-obj} as the following equivalent
problem: 
\begin{align}
\max\ _{p\in L^{q}\left(\mathbb{R}^{n}\right)} & -\int_{\mathbb{R}^{n}}\left(p\left(x;q,\Sigma\right)\right)^{q}dx\nonumber \\
\text{s.t. } & p\left(x;q,\Sigma\right)\geq0;\nonumber \\
 & \int_{\mathbb{R}_{>0}^{n}}p\left(x;q,\Sigma\right)dx=2^{-n};\nonumber \\
 & \frac{\int_{\mathbb{R}^{n}}x\cdot p^{q}\left(x;q,\Sigma\right)dx}{\int_{\mathbb{R}^{n}}p^{q}\left(x;q,\Sigma\right)dx}=0;\nonumber \\
 & \frac{\int_{\mathbb{R}^{n}}xx^{T}\cdot p^{q}\left(x;q,\Sigma\right)dx}{\int_{\mathbb{R}^{n}}p^{q}\left(x;q,\Sigma\right)dx}=\Sigma.\label{eq:second-q-moment-constraint}
\end{align}
Thus, 
\begin{align}
Z & =2^{n}\int_{\mathbb{R}_{>0}^{n}}\left(\gamma+\left\langle x,\Sigma^{-1}x\right\rangle \right)^{\frac{1}{1-q}}dx\nonumber \\
 & =2^{n}\left|\Sigma^{\nicefrac{1}{2}}\right|\int_{\mathbb{R}_{>0}^{n}}\left(\gamma+\left\langle x,x\right\rangle \right)^{\frac{1}{1-q}}dx\nonumber \\
 & =2^{n}\left|\Sigma\right|^{\nicefrac{1}{2}}\int_{0}^{\infty}r^{n-1}\left(\gamma+r^{2}\right)^{\frac{1}{1-q}}\left(\prod_{i=1}^{n-2}\int_{0}^{\frac{\pi}{2}}\sin^{n-1-i}(\theta_{i})d\theta_{i}\cdot\int_{0}^{\frac{\pi}{2}}1d\theta\right)dr\nonumber \\
 & =2^{n}\left|\Sigma\right|^{\nicefrac{1}{2}}\cdot\left(\prod_{i=1}^{n-2}\int_{0}^{\frac{\pi}{2}}\sin^{n-1-i}(\theta_{i})d\theta_{i}\cdot\int_{0}^{\frac{\pi}{2}}1d\theta\right)\cdot\int_{0}^{\infty}r^{n-1}\left(\gamma+r^{2}\right)^{\frac{1}{1-q}}dr\nonumber \\
 & =\pi2^{n-1}\left|\Sigma\right|^{\nicefrac{1}{2}}\cdot\left(\prod_{i=1}^{n-2}\frac{1}{2}\frac{\Gamma\left(\frac{n-i}{2}\right)\sqrt{\pi}}{\Gamma\left(\frac{n-i+1}{2}\right)}\right)\cdot\int_{0}^{\infty}r^{n-1}\left(\gamma+r^{2}\right)^{\frac{1}{1-q}}dr\nonumber \\
 & =2\pi^{\frac{n}{2}}\left|\Sigma\right|^{\nicefrac{1}{2}}\cdot\left(\Gamma\left(\frac{n}{2}\right)\right)^{-1}\cdot\int_{0}^{\infty}r^{n-1}\left(\gamma+r^{2}\right)^{\frac{1}{1-q}}dr\nonumber \\
 & =2\pi^{\frac{n}{2}}\left|\Sigma\right|^{\nicefrac{1}{2}}\cdot\left(\Gamma\left(\frac{n}{2}\right)\right)^{-1}\cdot\int_{0}^{\infty}\gamma^{\frac{n-1}{2}+\frac{1}{1-q}}\left(\frac{r}{\sqrt{\gamma}}\right)^{n-1}\left(1+\left(\frac{r}{\sqrt{\gamma}}\right)^{2}\right)^{\frac{1}{1-q}}dr\nonumber \\
 & =2\pi^{\frac{n}{2}}\left|\Sigma\right|^{\nicefrac{1}{2}}\cdot\left(\Gamma\left(\frac{n}{2}\right)\right)^{-1}\cdot\gamma^{\frac{n}{2}+\frac{1}{1-q}}\cdot\int_{0}^{\infty}\left(r^{\prime}\right)^{n-1}\left(1+\left(r^{\prime}\right)^{2}\right)^{\frac{1}{1-q}}dr^{\prime}\nonumber \\
 & =2\pi^{\frac{n}{2}}\left|\Sigma\right|^{\nicefrac{1}{2}}\cdot\left(\Gamma\left(\frac{n}{2}\right)\right)^{-1}\cdot\gamma^{\frac{n}{2}+\frac{1}{1-q}}\cdot\int_{0}^{\infty}\left(\left(r^{\prime}\right)^{1-n}\left(1+\left(r^{\prime}\right)^{2}\right)^{\frac{1}{q-1}}\right)^{-1}dr^{\prime}\nonumber \\
 & =2\pi^{\frac{n}{2}}\left|\Sigma\right|^{\nicefrac{1}{2}}\cdot\left(\Gamma\left(\frac{n}{2}\right)\right)^{-1}\cdot\gamma^{\frac{n}{2}+\frac{1}{1-q}}\cdot\frac{1}{2}B\left(\frac{1}{q-1}-\frac{n}{2},\frac{n}{2}\right)\label{eq:beta-function-step}\\
 & =\pi^{\frac{n}{2}}\left|\Sigma\right|^{\nicefrac{1}{2}}\cdot\left(\Gamma\left(\frac{n}{2}\right)\right)^{-1}\cdot\gamma^{\frac{n}{2}+\frac{1}{1-q}}\cdot\frac{\Gamma\left(\frac{1}{q-1}-\frac{n}{2}\right)\Gamma\left(\frac{n}{2}\right)}{\Gamma\left(\frac{1}{q-1}\right)}\nonumber \\
 & =\pi^{\frac{n}{2}}\left|\Sigma\right|^{\nicefrac{1}{2}}\frac{\Gamma\left(\frac{1}{q-1}-\frac{n}{2}\right)}{\Gamma\left(\frac{1}{q-1}\right)}\cdot\gamma^{\frac{n}{2}+\frac{1}{1-q}}.\nonumber 
\end{align}
In step \eqref{eq:beta-function-step}, we use the following formula
for Beta function \cite{Tsukada2005}: 
\begin{equation}
\int_{0}^{\infty}\left(x^{\alpha}\left(1+x^{\lambda}\right)^{\beta}\right)^{-1}dx=\frac{1}{\lambda}B\left(\beta-\frac{1-\alpha}{\lambda},\frac{1-\alpha}{\lambda}\right),\label{eq:beta-function-formula}
\end{equation}
where $\alpha<1,\ \lambda>0,\ \beta>0,\ \lambda\beta>1-\alpha$. Well--definedness of $Z$ and \eqref{eq:beta-function-step} implies that $\frac{1}{q-1}-\frac{n}{2}>0$; i.e.,
$q<1+\frac{2}{n}$, which is the reason for the upper bound for the
choice of $q$. 

\eqref{eq:second-q-moment-constraint} implies that 
\begin{equation}
\text{tr}\left(\Sigma^{-1}\frac{\int_{\mathbb{R}^{n}}xx^{T}\cdot p^{q}\left(x\right)dx}{\int_{\mathbb{R}^{n}}p^{q}\left(x\right)dx}\right)=\text{tr}\left(\Sigma^{-1}\Sigma\right)=n.\label{eq:second-q-moment-trace-trick-1}
\end{equation}
Hence, since $p\left(x\right)$ is symmetric, 
\begin{align}
& \text{tr}\left(\Sigma^{-1}\frac{\int_{\mathbb{R}^{n}}xx^{T}\cdot p^{q}\left(x;q,\Sigma\right)dx}{\int_{\mathbb{R}^{n}}p^{q}\left(x;q,\Sigma\right)dx}\right) \\ & =\text{tr}\left(\Sigma^{-1}\frac{\int_{\mathbb{R}_{>0}^{n}}xx^{T}\cdot p^{q}\left(x;q,\Sigma\right)dx}{\int_{\mathbb{R}_{>0}^{n}}p^{q}\left(x;q,\Sigma\right)dx}\right)\nonumber \\
 & =\frac{\text{tr}\left(\int_{\mathbb{R}_{>0}^{n}}\Sigma^{-1}xx^{T}\cdot p^{q}\left(x;q,\Sigma\right)dx\right)}{\int_{\mathbb{R}_{>0}^{n}}p^{q}\left(x;q,\Sigma\right)dx}\nonumber \\
 & =\frac{\int_{\mathbb{R}_{>0}^{n}}\text{tr}\left(\Sigma^{-1}xx^{T}\cdot p^{q}\left(x;q,\Sigma\right)\right)dx}{\int_{\mathbb{R}_{>0}^{n}}p^{q}\left(x;q,\Sigma\right)dx}\nonumber \\
 & =\frac{\int_{\mathbb{R}_{>0}^{n}}\text{tr}\left(x^{T}\Sigma^{-1}x\right)\cdot p^{q}\left(x;q,\Sigma\right)dx}{\int_{\mathbb{R}_{>0}^{n}}p^{q}\left(x;q,\Sigma\right)dx}\nonumber \\
 & =\frac{\int_{\mathbb{R}_{>0}^{n}}\left\langle x,\Sigma^{-1}x\right\rangle \cdot\left(\frac{1}{Z}\left(\gamma+\left\langle x,\Sigma^{-1}x\right\rangle \right)^{\frac{1}{1-q}}\right)^{q}dx}{\int_{\mathbb{R}_{>0}^{n}}\left(\frac{1}{Z}\left(\gamma+\left\langle x,\Sigma^{-1}x\right\rangle \right)^{\frac{1}{1-q}}\right)^{q}dx}\nonumber \\
 & =\frac{\int_{\mathbb{R}_{>0}^{n}}\left\langle x,\Sigma^{-1}x\right\rangle \cdot\left(\gamma+\left\langle x,\Sigma^{-1}x\right\rangle \right)^{\frac{q}{1-q}}dx}{\int_{\mathbb{R}_{>0}^{n}}\left(\gamma+\left\langle x,\Sigma^{-1}x\right\rangle \right)^{\frac{q}{1-q}}dx}\nonumber \\
 & =\frac{\int_{\mathbb{R}_{>0}^{n}}\left|\Sigma\right|^{\nicefrac{1}{2}}\left\langle x,x\right\rangle \cdot\left(\gamma+\left\langle x,x\right\rangle \right)^{\frac{q}{1-q}}dx}{\int_{\mathbb{R}_{>0}^{n}}\left|\Sigma\right|^{\nicefrac{1}{2}}\left(\gamma+\left\langle x,x\right\rangle \right)^{\frac{q}{1-q}}dx}\nonumber \\
 & =\frac{\int_{\mathbb{R}_{>0}^{n}}\left\langle x,x\right\rangle \cdot\left(\gamma+\left\langle x,x\right\rangle \right)^{\frac{q}{1-q}}dx}{\int_{\mathbb{R}_{>0}^{n}}\left(\gamma+\left\langle x,x\right\rangle \right)^{\frac{q}{1-q}}dx}\nonumber \\
 & =\frac{\int_{0}^{\infty}r^{n-1}\cdot r^{2}\cdot\left(\gamma+r^{2}\right)^{\frac{q}{1-q}}\cdot\left(\prod_{i=1}^{n-2}\int_{0}^{\frac{\pi}{2}}\sin^{n-1-i}(\theta_{i})d\theta_{i}\cdot\int_{0}^{\frac{\pi}{2}}1d\theta\right)dr}{\int_{0}^{\infty}r^{n-1}\cdot\left(\gamma+r^{2}\right)^{\frac{q}{1-q}}\cdot\left(\prod_{i=1}^{n-2}\int_{0}^{\frac{\pi}{2}}\sin^{n-1-i}(\theta_{i})d\theta_{i}\cdot\int_{0}^{\frac{\pi}{2}}1d\theta\right)dr}\nonumber \\
 & =\frac{\int_{0}^{\infty}r^{n+1}\cdot\left(\gamma+r^{2}\right)^{\frac{q}{1-q}}dr}{\int_{0}^{\infty}r^{n-1}\cdot\left(\gamma+r^{2}\right)^{\frac{q}{1-q}}dr}\nonumber \\
 & =\frac{\gamma\int_{0}^{\infty}\left(\left(r^{\prime}\right)^{-n-1}\cdot\left(1+\left(r^{\prime}\right)^{2}\right)^{\frac{q}{q-1}}\right)^{-1}dr^{\prime}}{\int_{0}^{\infty}\left(\left(r^{\prime}\right)^{1-n}\cdot\left(1+\left(r^{\prime}\right)^{2}\right)^{\frac{q}{q-1}}\right)^{-1}dr^{\prime}}\nonumber \\
 & =\frac{\gamma B\left(\frac{q}{q-1}-\frac{n+2}{2},\frac{n+2}{2}\right)}{B\left(\frac{q}{q-1}-\frac{n}{2},\frac{n}{2}\right)}\label{eq:beta-function-usage-2}\\
 & =\gamma\cdot\frac{\Gamma\left(\frac{q}{q-1}-\frac{n}{2}-1\right)\Gamma\left(\frac{n+2}{2}\right)}{\Gamma\left(\frac{q}{q-1}\right)}/\frac{\Gamma\left(\frac{q}{q-1}-\frac{n}{2}\right)\Gamma\left(\frac{n}{2}\right)}{\Gamma\left(\frac{q}{q-1}\right)}\nonumber \\
 & =\gamma\cdot\frac{n}{2}\cdot\left(\frac{q}{q-1}-\frac{n}{2}-1\right)^{-1}.\label{eq:second-q-moment-trace-trick-2}
\end{align}
In step \eqref{eq:beta-function-usage-2}, we used \eqref{eq:beta-function-formula}.
Combining \eqref{eq:second-q-moment-trace-trick-1} and \eqref{eq:second-q-moment-trace-trick-2},
we have 
\begin{equation}
\gamma\cdot\frac{n}{2}\cdot\left(\frac{q}{q-1}-\frac{n}{2}-1\right)^{-1}=n,
\end{equation}
which gives that 
\begin{equation}
\gamma=\frac{2q}{q-1}-n-2=\frac{2}{q-1}-n.
\end{equation}
Thus, the probability density function that maximizes problem \eqref{eq:generalized-entropy-obj}
is: 
\begin{align*}
p\left(x;q,\Sigma\right) & =\left(\pi^{\frac{n}{2}}\left|\Sigma\right|^{\nicefrac{1}{2}}\frac{\Gamma\left(\frac{1}{q-1}-\frac{n}{2}\right)}{\Gamma\left(\frac{1}{q-1}\right)}\cdot\left(\frac{2}{q-1}-n\right)^{\frac{n}{2}+\frac{1}{1-q}}\right)^{-1}\left(\left(\frac{2}{q-1}-n\right)+\left\langle x,\Sigma^{-1}x\right\rangle \right)^{\frac{1}{1-q}}\\
 & =\left(\pi^{\frac{n}{2}}\left|\Sigma\right|^{\nicefrac{1}{2}}\frac{\Gamma\left(\frac{1}{q-1}-\frac{n}{2}\right)}{\Gamma\left(\frac{1}{q-1}\right)}\cdot\left(\frac{2}{q-1}-n\right)^{\frac{n}{2}}\right)^{-1}\left(1+\left(\frac{2}{q-1}-n\right)^{-1}\left\langle x,\Sigma^{-1}x\right\rangle \right)^{\frac{1}{1-q}}\\
 & =\frac{1}{\left|\pi\Sigma\right|^{\nicefrac{1}{2}}}\cdot\frac{\Gamma\left(\frac{1}{q-1}\right)}{\Gamma\left(\frac{1}{q-1}-\frac{n}{2}\right)}\cdot\left(\frac{2}{q-1}-n\right)^{-\frac{n}{2}}\cdot\left(1+\left(\frac{2}{q-1}-n\right)^{-1}\cdot\left\langle x,\Sigma^{-1}x\right\rangle \right)^{\frac{1}{1-q}}.
\end{align*}
\end{proof}
When $q\geq1+\frac{2}{n}$, the solution to problem \eqref{eq:generalized-entropy-obj} does not exist, due to property \ref{enu:maximizing-format} and discussions in the proof. In Lemma \ref{lem:density-normalization}, the presented density \eqref{eq:$q$Gaussian-heavy-tail-density}
outlines a formula for multivariate bell-curve distributions dependent
on the value of $q$. As $q$ shifts from values approaching $1$
from above to values approaching $1+\frac{2}{n}$ form below, the
resulting density transitions from Gaussian through a scaled version
of the multivariate $t-$distribution to Cauchy and beyond. This density
explicitly details all parameters, enabling the application of the
maximum likelihood principle and facilitating the use of maximum likelihood
estimation in modeling correlated data performed in Section \ref{sec:optimizing-penalized-mle}. 

In the context of \eqref{eq:$q$Gaussian-heavy-tail-density}, the \emph{characterization
matrix} \cite{Costa2003}, denoted by $\Sigma$, can undergo modifications
to include the degree of freedom parameter $m\coloneqq\frac{2}{q-1}-n$
\cite{Vignat2005}; specifically,
\begin{align}
p\left(x;q,\Lambda\right) & =\frac{1}{\left|\pi\Lambda\right|^{\nicefrac{1}{2}}}\cdot\frac{\Gamma\left(\frac{m}{2}+\frac{n}{2}\right)}{\Gamma\left(\frac{m}{2}\right)}\cdot\left(1+\left\langle x,\Lambda^{-1}x\right\rangle \right)^{\frac{1}{1-q}},\label{eq:$q$Gaussian-simple-form}\\
\text{where }\Lambda & \coloneqq m\Sigma.\nonumber \\
\text{and }m & \coloneqq\frac{2}{q-1}-n\nonumber 
\end{align}
Below are a few useful remarks related to the $q$Gaussian distribution
and other bell-curve distributions. 
\begin{rem}
\label{rem:incorporate-location} To incorporate the location parameter
$\mu$, \eqref{eq:$q$Gaussian-heavy-tail-density} and \eqref{eq:$q$Gaussian-simple-form}
become 
\begin{align*}
p\left(x;\mu,q,\Sigma\right) & =\frac{1}{\left|\pi\Sigma\right|^{\nicefrac{1}{2}}}\cdot\frac{\Gamma\left(\frac{1}{q-1}\right)}{\Gamma\left(\frac{1}{q-1}-\frac{n}{2}\right)}\cdot\left(\frac{2}{q-1}-n\right)^{-\frac{n}{2}}\\
 & \qquad\cdot\left(1+\left(\frac{2}{q-1}-n\right)^{-1}\cdot\left\langle x-\mu,\Sigma^{-1}\left(x-\mu\right)\right\rangle \right)^{\frac{1}{1-q}};\\
p\left(x;\mu,q,\Lambda\right) & =\frac{1}{\left|\pi\Lambda\right|^{\nicefrac{1}{2}}}\cdot\frac{\Gamma\left(\frac{m}{2}+\frac{n}{2}\right)}{\Gamma\left(\frac{m}{2}\right)}\cdot\left(1+\left\langle x-\mu,\Lambda^{-1}\left(x-\mu\right)\right\rangle \right)^{\frac{1}{1-q}}.
\end{align*}
\end{rem}

\begin{rem}
For random vector $X\sim q\text{Gaussian}\left(q,\Sigma\right)$,
its $q-$covariance is 
\begin{equation}
\mathbb{E}_{q}\left[XX^{T}\right]=\left(\frac{1}{q-1}-\frac{n}{2}\right)^{\frac{1-q}{2}}\left|\pi\Sigma\right|^{\frac{1-q}{2}}\cdot\frac{\Gamma\left(\frac{q}{q-1}-\frac{n}{2}\right)/\left(\Gamma\left(\frac{1}{q-1}-\frac{n}{2}\right)\right)^{q}}{\Gamma\left(\frac{q}{q-1}\right)/\left(\Gamma\left(\frac{1}{q-1}\right)\right)^{q}}\cdot\Sigma.
\end{equation}
\end{rem}

\begin{proof}
We note that 
\begin{align*}
\int_{\mathbb{R}^{n}}p^{q}\left(x;q,\Sigma\right)dx & =\int_{\mathbb{R}^{n}}\left(\frac{1}{\left|\pi\Lambda\right|^{\nicefrac{1}{2}}}\cdot\frac{\Gamma\left(\frac{1}{q-1}\right)}{\Gamma\left(\frac{1}{q-1}-\frac{n}{2}\right)}\cdot\left(1+\left\langle x,\Lambda^{-1}x\right\rangle \right)^{\frac{1}{1-q}}\right)^{q}dx\\
 & =\left(\frac{1}{\left|\pi\Lambda\right|^{\nicefrac{1}{2}}}\cdot\frac{\Gamma\left(\frac{1}{q-1}\right)}{\Gamma\left(\frac{1}{q-1}-\frac{n}{2}\right)}\right)^{q}\cdot\int_{\mathbb{R}^{n}}\left(\left(1+\left\langle x,\Lambda^{-1}x\right\rangle \right)^{\frac{1}{1-q}}\right)^{q}dx\\
 & =\left(\frac{1}{\left|\pi\Lambda\right|^{\nicefrac{1}{2}}}\cdot\frac{\Gamma\left(\frac{1}{q-1}\right)}{\Gamma\left(\frac{1}{q-1}-\frac{n}{2}\right)}\right)^{q}\cdot\int_{\mathbb{R}^{n}}\left(1+\left\langle x,\Lambda^{-1}x\right\rangle \right)^{\frac{q}{1-q}}dx\\
 & =\left(\frac{1}{\left|\pi\Lambda\right|^{\nicefrac{1}{2}}}\cdot\frac{\Gamma\left(\frac{1}{q-1}\right)}{\Gamma\left(\frac{1}{q-1}-\frac{n}{2}\right)}\right)^{q}\left|\Lambda\right|^{\nicefrac{1}{2}}\cdot\int_{\mathbb{R}^{n}}\left(1+\left\langle x,x\right\rangle \right)^{\frac{q}{1-q}}dx\\
 & =2^{n}\left(\frac{1}{\left|\pi\Lambda\right|^{\nicefrac{1}{2}}}\cdot\frac{\Gamma\left(\frac{1}{q-1}\right)}{\Gamma\left(\frac{1}{q-1}-\frac{n}{2}\right)}\right)^{q}\left|\Lambda\right|^{\nicefrac{1}{2}}\cdot\int_{\mathbb{R}_{>0}^{n}}\left(1+\left\langle x,x\right\rangle \right)^{\frac{q}{1-q}}dx\\
 & =2^{n}\left(\frac{1}{\left|\pi\Lambda\right|^{\nicefrac{1}{2}}}\cdot\frac{\Gamma\left(\frac{1}{q-1}\right)}{\Gamma\left(\frac{1}{q-1}-\frac{n}{2}\right)}\right)^{q}\left|\Lambda\right|^{\nicefrac{1}{2}}\cdot\int_{0}^{\infty}r^{n-1}\left(1+r^{2}\right)^{\frac{q}{1-q}}\\
 & \qquad\cdot\left(\prod_{i=1}^{n-2}\int_{0}^{\frac{\pi}{2}}\sin^{n-1-i}(\theta_{i})d\theta_{i}\cdot\int_{0}^{\frac{\pi}{2}}1d\theta\right)dr\\
 & =\pi2^{n-1}\left(\frac{1}{\left|\pi\Lambda\right|^{\nicefrac{1}{2}}}\cdot\frac{\Gamma\left(\frac{1}{q-1}\right)}{\Gamma\left(\frac{1}{q-1}-\frac{n}{2}\right)}\right)^{q}\left|\Lambda\right|^{\nicefrac{1}{2}}\cdot\left(\prod_{i=1}^{n-2}\frac{1}{2}\frac{\Gamma\left(\frac{n-i}{2}\right)\sqrt{\pi}}{\Gamma\left(\frac{n-i+1}{2}\right)}\right)\\
 & \qquad\cdot\int_{0}^{\infty}r^{n-1}\left(1+r^{2}\right)^{\frac{q}{1-q}}dr\\
 & =2\pi^{\frac{n}{2}}\left(\frac{1}{\left|\pi\Lambda\right|^{\nicefrac{1}{2}}}\cdot\frac{\Gamma\left(\frac{1}{q-1}\right)}{\Gamma\left(\frac{1}{q-1}-\frac{n}{2}\right)}\right)^{q}\left|\Lambda\right|^{\nicefrac{1}{2}}\cdot\left(\Gamma\left(\frac{n}{2}\right)\right)^{-1}\cdot\int_{0}^{\infty}\left(r^{1-n}\left(1+r^{2}\right)^{\frac{q}{q-1}}\right)^{-1}dr\\
 & =2\pi^{\frac{n}{2}}\left(\frac{1}{\left|\pi\Lambda\right|^{\nicefrac{1}{2}}}\cdot\frac{\Gamma\left(\frac{1}{q-1}\right)}{\Gamma\left(\frac{1}{q-1}-\frac{n}{2}\right)}\right)^{q}\left|\Lambda\right|^{\nicefrac{1}{2}}\cdot\left(\Gamma\left(\frac{n}{2}\right)\right)^{-1}\cdot\frac{1}{2}B\left(\frac{q}{q-1}-\frac{n}{2},\frac{n}{2}\right)\\
 & =\pi^{\frac{n}{2}}\left(\frac{1}{\left|\pi\Lambda\right|^{\nicefrac{1}{2}}}\cdot\frac{\Gamma\left(\frac{1}{q-1}\right)}{\Gamma\left(\frac{1}{q-1}-\frac{n}{2}\right)}\right)^{q}\left|\Lambda\right|^{\nicefrac{1}{2}}\cdot\left(\Gamma\left(\frac{n}{2}\right)\right)^{-1}\cdot\frac{\Gamma\left(\frac{q}{q-1}-\frac{n}{2}\right)\Gamma\left(\frac{n}{2}\right)}{\Gamma\left(\frac{q}{q-1}\right)}\\
 & =\left(\frac{1}{\left|\pi\Lambda\right|^{\nicefrac{1}{2}}}\cdot\frac{\Gamma\left(\frac{1}{q-1}\right)}{\Gamma\left(\frac{1}{q-1}-\frac{n}{2}\right)}\right)^{q}\left|\pi\Lambda\right|^{\nicefrac{1}{2}}\cdot\frac{\Gamma\left(\frac{q}{q-1}-\frac{n}{2}\right)}{\Gamma\left(\frac{q}{q-1}\right)}\\
 & =\left|\pi\Lambda\right|^{\frac{1-q}{2}}\cdot\frac{\Gamma\left(\frac{q}{q-1}-\frac{n}{2}\right)/\left(\Gamma\left(\frac{1}{q-1}-\frac{n}{2}\right)\right)^{q}}{\Gamma\left(\frac{q}{q-1}\right)/\left(\Gamma\left(\frac{1}{q-1}\right)\right)^{q}}.
\end{align*}
From \eqref{eq:q-second-moment-constraint}, we then have the following
expression for the $q-$variance-covariance matrix 
\begin{align}
\mathbb{E}_{q}\left[XX^{T}\right] & =\int_{\mathbb{R}^{n}}xx^{T}\cdot p^{q}\left(x;q,\Sigma\right)dx\nonumber \\
 & =\int_{\mathbb{R}^{n}}p^{q}\left(x;q,\Sigma\right)dx\cdot\Sigma\nonumber \\
 & =\left|\pi\Lambda\right|^{\frac{1-q}{2}}\cdot\frac{\Gamma\left(\frac{q}{q-1}-\frac{n}{2}\right)/\left(\Gamma\left(\frac{1}{q-1}-\frac{n}{2}\right)\right)^{q}}{\Gamma\left(\frac{q}{q-1}\right)/\left(\Gamma\left(\frac{1}{q-1}\right)\right)^{q}}\cdot\Sigma\nonumber \\
 & =m^{\frac{1-q}{2}}\left|\pi\Sigma\right|^{\frac{1-q}{2}}\cdot\frac{\Gamma\left(\frac{q}{q-1}-\frac{n}{2}\right)/\left(\Gamma\left(\frac{1}{q-1}-\frac{n}{2}\right)\right)^{q}}{\Gamma\left(\frac{q}{q-1}\right)/\left(\Gamma\left(\frac{1}{q-1}\right)\right)^{q}}\cdot\Sigma\nonumber \\
 & =\left(\frac{1}{q-1}-\frac{n}{2}\right)^{\frac{1-q}{2}}\left|\pi\Sigma\right|^{\frac{1-q}{2}}\Sigma\cdot\frac{\Gamma\left(\frac{q}{q-1}-\frac{n}{2}\right)/\left(\Gamma\left(\frac{1}{q-1}-\frac{n}{2}\right)\right)^{q}}{\Gamma\left(\frac{q}{q-1}\right)/\left(\Gamma\left(\frac{1}{q-1}\right)\right)^{q}}.\label{eq:q-variance-covariance}
\end{align}
\end{proof}
\begin{rem}
Multivariate $t$ distributions with degree of freedom of $\frac{2}{q-1}-n$
and the scale matrix of $\Sigma$ are $q$Gaussian with shape parameter
$q$ and scale matrix $\Sigma$. 
\end{rem}

\begin{rem}
The multivariate Cauchy distributions \cite{Lee2014a} in $\mathbb{R}^{n}$
with scale matrix $\frac{1}{2}\Sigma$ are $q$Gaussian with shape
parameter $q=1+\frac{2}{n+1}$ and scale matrix $\Sigma$. 
\end{rem}

\begin{rem}
For a random vector $X\sim q\text{Gaussian}\left(q,\Sigma\right)$,
the variance-covariance matrix exists if and only if $q<1+\frac{2}{n+2}$;
following the same procedure to derive the variance-covariance matrix
for multivariate $t$ distribution yields that,\emph{ if existing,}
\begin{equation}
\mathbb{E}\left[XX^{T}\right]=\frac{m}{m-2}\Sigma,\label{eq:variance-covariance}
\end{equation}
where $m=\frac{2}{q-1}-n$. 
\end{rem}

The remarks above on $q$Gaussian distributions reveal their flexibility
in incorporating a location parameter, $\mu$, and adapting to multivariate
contexts through detailed formulas. Remarkably, these distributions
bridge with the class of multivariate bell curve distributions including
Gaussian, scaled $t$, and Cauchy distributions under certain conditions
on the shape parameter $q$. The elaboration of $q-$correlation and the $q-$variable-covariance matrix underscores the capability of
these distributions to model and understand the intricacies of correlated
data effectively. 

\section{\label{sec:optimization} Proximal Conjugate Gradient Algorithm }

As delineated in Section \ref{subsec:optimization-motivation}, our
optimization scenario is predominantly quadratic in nature; therefore,
the conjugate gradient approach has potential for fast convergence
and numerical stability. This insight forms the basis for our introduction
of a proximal conjugate gradient algorithm framework, tailored to
navigate the complexities introduced by the nonconvex penalized $q$Gaussian
likelihood function for sparse statistical learning. To lay the groundwork
for this discussion, we begin with a overview of relevant concepts
in variational and nonsmooth analysis, presented in Section \ref{subsec:variational-analysis}. The results presented in Section \ref{subsec:variational-analysis} can be found in recent textbooks on variational and nonsmooth analysis, such as \cite{Rockafellar2010, Clarke1990, Morduchovic2018, Mordukhovich2006, Mordukhovich2006a, Bauschke2011}. 

\subsection{\label{subsec:variational-analysis} A Review on Variational and
Nonsmooth Analysis }

Let $\mathcal{C}^{k,\alpha_{H}}$ with $k\in\mathbb{N}_{\geq0}$ and $\alpha_{H}\in\left[0,1\right]$ denote the function space such that $\forall F\in\mathcal{C}^{k,\alpha_{H}}$, $F$ is $k$th continuously differentiable, and $D^{k}F$ is globally H\"{o}lder continuous with exponent $\alpha_{H}$; clearly, when $\alpha_{H}=1$, $D^{k}F$ is globally Lipschitz continuous. In this subsection, we will state the results from variational and
nonsmooth analysis related to the following optimization problem:
\begin{equation}
\min\ _{x\in\mathbb{R}^{p+1}}f\left(x\right)\coloneqq g\left(x\right)+h\left(x\right),\label{eq:general-opt-problem}
\end{equation}
where $f\in\mathcal{C}^{0,0}\left(\mathbb{R}^{p+1},\mathbb{R}\right)$
is a locally-Lipschitz proper function, $g\in\mathcal{C}^{1,1}\left(\mathbb{R}^{p+1},\mathbb{R}\right)$
is globally $L_{\nabla g}-$smooth and possibly nonconvex, and $h\in\mathcal{C}^{0,0}\left(\mathbb{R}^{p+1},\mathbb{R}\right)$
is a convex locally-Lipschitz function, possibly nonsmooth.   The globally Lipschitz property
of $\nabla g$ can be alternatively addressed by carrying out the optimization
over a compact set. In such scenarios, given that $\nabla g$ is locally
Lipschitz, it inherently becomes globally Lipschitz when restricted
to a compact set. 

Results from convex analysis suggest that $g,h$ are Clarke regular;
thus, $f$ is Clarke regular. The Clarke's directional derivative,
defined by 
\begin{align*}
f^{\circ}\left(x;d\right) & \coloneqq\lim_{y\rightarrow x}\sup_{t\searrow0}\frac{f\left(y+td\right)-f\left(y\right)}{t}\\
 & =\inf_{\delta>0}\sup_{\left\Vert y-x\right\Vert \leq\delta,0<t<\delta}\frac{f\left(y+td\right)-f\left(y\right)}{t},
\end{align*}
exists for all $x\in\mathbb{R}^{p+1}$ since $f$ is Clarke regular.
The Clarke subdifferential, denoted by $\partial_{\circ}$, is a set-valued
mapping defined by 
\begin{equation}
\partial_{\circ}f\left(x\right)\coloneqq\left\{ \phi\in\mathbb{R}^{p+1}\vert\forall d\in\mathbb{R}^{p+1},\ \left\langle \phi,d\right\rangle \leq f^{\circ}\left(x;d\right)\right\} .
\end{equation}
Since $f$ is a locally Lipschitz function, $\forall x\in\mathbb{R}^{p+1},\ \partial_{\circ}f\left(x\right)\neq\emptyset$.
Fundamental convex analysis results show that $\forall x\in\mathbb{R}^{p+1},\ \partial_{\circ}f\left(x\right)$
is compact, convex, and upper-semicontinuous. $\forall x,d\in\mathbb{R}^{p+1}$,
and we also have 
\begin{equation}
f^{\circ}\left(x;d\right)=\max\ _{u\in\partial_{\circ}f\left(x\right)}\left\langle u,\frac{d}{\left\Vert d\right\Vert }\right\rangle .\label{eq:max-inner-product-def}
\end{equation}
Furthermore, \eqref{eq:max-inner-product-def} is upper-semicontinuous
with respect to $x$. Simple convex geometry results conclude that 
\begin{equation}
    \left\{ \left(v,-1\right)\vert v\in\partial_{\circ}f\left(x\right)\right\} =N_{\text{epi }f}\left(x,f\left(x\right)\right),
\end{equation}
where $N_{\text{epi }f}\left(x,f\left(x\right)\right)$ denotes the \emph{normal cone} to $\text{epi }f$ at the point $\left(x,f\left(x\right)\right)$. 

Since $g$ is smooth, $\partial_{\circ}g\left(x\right)=\left\{ \nabla g\left(x\right)\right\} $
is a singleton. Then $\partial_{\circ}f\left(x\right)=\partial_{\circ}g\left(x\right)+\partial_{\circ}h\left(x\right)$,
and 
\begin{align}
f^{\circ}\left(x;d\right) & =\max\ _{u\in\partial_{\circ}f\left(x\right)}\left\langle u,\frac{d}{\left\Vert d\right\Vert }\right\rangle \nonumber \\
 & =\max\ _{u\in\left(\nabla g\left(x\right)+\partial_{\circ}h\left(x\right)\right)}\left\langle u,\frac{d}{\left\Vert d\right\Vert }\right\rangle \nonumber \\
 & =\left\langle \nabla g\left(x\right),\frac{d}{\left\Vert d\right\Vert }\right\rangle +\max\ _{v\in\partial_{\circ}h\left(x\right)}\left\langle v,\frac{d}{\left\Vert d\right\Vert }\right\rangle \label{eq:clarke-subdifferential-decomposition}\\
 & =g^{\circ}\left(x;d\right)+h^{\circ}\left(x;d\right) \notag.
\end{align}

Let 
\begin{equation}
M_{\rho}t\left(x\right)\coloneqq\left(t\square\left(\frac{1}{2\rho}\left\Vert \cdot\right\Vert ^{2}\right)\right)\left(x\right)=\inf_{y\in\mathbb{R}^{p+1}}t\left(y\right)+\frac{1}{2\rho}\left\Vert y-x\right\Vert ^{2} \label{eq:moreau-envelope-defn}
\end{equation}
denote the Moreau envelope operator parameterized by $\rho\in\mathbb{R}_{>0}$
applied on an arbitrary proper, lower semi-continuous, locally Lipschitz function $t\in\mathcal{C}^{0,0}\left(\mathbb{R}^{p+1},\mathbb{R}\right)$,
where ``$\square$'' denotes the infimal convolution operator. We
have that the Moreau envelope is a smoothing operator, specifically,
\begin{equation}
\text{epi }t+\text{epi }\frac{1}{2\rho}\left\Vert \cdot\right\Vert ^{2}\subseteq \text{epi }M_{\rho}t,\label{eq:epi-decomposition}
\end{equation}
where ``$\text{epi}$'' denotes the epigraph. Clearly, $M_{\rho}t\left(x\right)\leq t\left(x\right)$, since $\left(0,0\right)\in\text{epi }\frac{1}{2\rho}\left\Vert \cdot\right\Vert ^{2}$ implies that $\text{epi }t=\text{epi }t+\left(0,0\right)\subseteq\text{epi }t+\text{epi }\frac{1}{2\rho}\left\Vert \cdot\right\Vert ^{2}\subseteq\text{epi }M_{\rho}t$. When $t$ is convex, \eqref{eq:epi-decomposition} takes the equal sign; i.e., the infimal convolution becomes the exact infimal convolution. 

Consider the affine function 
\begin{equation}
A\left(x\right)\coloneqq\left\langle a,x\right\rangle +b,
\end{equation}
simple algebra shows that the Moreau envelope applied on $A$ is 
\begin{equation}
M_{\rho}A\left(x\right)=\left\langle a,x\right\rangle +b+\frac{\rho}{2}\left\Vert a\right\Vert ^{2}=A\left(x\right)+\frac{\rho}{2}\left\Vert a\right\Vert ^{2}
\end{equation}
for some $a,b\in\mathbb{R}^{p+1}$. Moreover, the following affine
addition property is often used in proximal algorithms, mainly due
to the fact that the epigraph of an affine function is a half-space that
the Moreau envelope applied on: 
\begin{align}
M_{\rho}\left(t+A\right)\left(x\right) & =M_{\rho}t\left(x-\rho a\right)+\left\langle a,x\right\rangle +b-\frac{\rho}{2}\left\Vert a\right\Vert ^{2}\label{eq:Moreau-envelope-affine-addition}
\end{align}
Let 
\begin{equation}
\text{prox}_{\rho t}\left(x\right)\coloneqq\arg M_{\rho}t\left(x\right)=\underset{y\in\mathbb{R}^{p+1}}{\arg\min\ }t\left(y\right)+\frac{1}{2\rho}\left\Vert y-x\right\Vert ^{2}
\end{equation}
denote the proximal operator, a set-valued mapping; we have 
\begin{equation}
\text{prox}_{\rho t}=\left(I+\rho\partial_{\circ}t\right)^{-1}\label{eq:proximal-resolvency}
\end{equation}
is the resolvent of the Clarke's subdifferential operator $\rho\partial_{\circ}t$. 

For nonsmooth problems, proximal methods are often used. Fundamental
convex analysis results show that: 
\begin{enumerate}
\item the Moreau envelope $M_{\rho}t\left(x\right)$ is twice differentiable;
thus, its gradient $\nabla M_{\rho}t\left(x\right)$ is well-defined. 
\item If $t$ is convex, $\text{prox}_{\rho t}\left(x\right)$ is a singleton.
\emph{For the sake of parsimony, with a slight abuse of notation,
we use $\text{prox}_{\rho t}$ to represent a function in this case.}
It follows that both $\text{prox}_{\rho t}$ and $\nabla M_{\rho}t$
are firmly non-expansive, and Moreau's decomposition theorem implies that 
\begin{equation}
\nabla M_{\rho}t\left(x\right)=\rho^{-1}\left(x-\text{prox}_{\rho t}\left(x\right)\right).
\end{equation}
\end{enumerate}
The results from variational and nonsmooth analysis in this subsection
have laid the foundation for proving the properties discussed
in Section \ref{subsec:PCG-framework}. 

\subsection{\label{subsec:PCG-framework} Proximal Conjugate Gradient Framework }

Proximal methods are powerful optimization techniques and are particularly
adept at handling problems characterized by sparsity, which usually
leads to an optimization problem that is nonsmooth \cite{Nikolova2000}.
Proximal algorithm tends to outperform other methods by far for nonsmooth
problems \cite{Yu2017,Li2016}. On another ground, Krylov subspace
methods represent a cornerstone of numerical analysis, providing a
powerful framework for solving large-scale optimization problems efficiently
\cite{Saad2003}. Krylov subspace methods exhibit a remarkable property
of convergence acceleration and vastly improved numerical stability,
making them indispensable tools in the numerical analyst's toolkit. 

Having reviewed the related results from variational and non-smooth analysis in Section \ref{subsec:variational-analysis}, we are
ready to introduce our main optimization framework to combine proximal
methods and conjugate gradient together. The essence of proximal algorithms
lies upon the Moreau envelope's smoothing on the objective function. Indeed,
proximal methods minimize $M_{\rho}f$ instead of $f$, thus avoiding nonsmoothness since $M_{\rho}f$ is a smooth function. In this view,
proximal algorithms are, in fact, minimizing the Moreau envelope of
the objective function. Thus, a wide class of numerical optimization
algorithms can easily have their proximal version. Among those, conjugate
gradients, a type of Krylov subspace method, are the state-of-the-art
methods in smooth optimization due to their computational and memory
efficiency, scalability, and numerical stability. 

Prior to introducing our proximal conjugate gradient update framework,
we will first show the equivalency of the optimization problem to
minimize \eqref{eq:general-opt-problem} and its the Moreau envelope.
In nonconvex optimization, the main task for numerical optimization
is to find a Clarke stationary point of the objective function, for
which we show in Theorem \ref{lem:equivalent-clarke-stationary-point}
that the set of Clarke stationary point of $f$ is identical to that
of $M_{\rho}f$ for $\rho\in\left(0,L_{\nabla g}^{-1}\right)$. 
\begin{lem}
\label{lem:equivalent-clarke-stationary-point} $\forall\bar{x}\in\mathbb{R}^{p+1},\rho\in\left(0,L_{\nabla g}^{-1}\right)$,
\begin{equation}
0\in\partial_{\circ}f\left(\bar{x}\right)\Leftrightarrow\nabla M_{\rho}f\left(\bar{x}\right)=0,\label{eq:equiv-equilibrium}
\end{equation}
\end{lem}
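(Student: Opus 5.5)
The plan is to go through the proximal map of $f=g+h$. For $\rho<L_{\nabla g}^{-1}$, the function $y\mapsto f(y)+\frac{1}{2\rho}\Vert y-x\Vert^{2}$ is strongly convex. Indeed, $g+\frac{1}{2\rho}\Vert\cdot\Vert^{2}$ is convex, since $g$ is $L_{\nabla g}$-smooth and hence weakly convex with modulus $L_{\nabla g}$. Its modulus is at least $\rho^{-1}-L_{\nabla g}>0$, and $h$ is convex. So for every $x$ the set $\text{prox}_{\rho f}(x)$ is a singleton. The standard weakly convex theory then shows that $M_{\rho}f$ is $\mathcal{C}^{1}$ with
\begin{equation*}
\nabla M_{\rho}f(x)=\rho^{-1}\left(x-\text{prox}_{\rho f}(x)\right).
\end{equation*}
This is the analogue of the Moreau decomposition quoted in Section \ref{subsec:variational-analysis} for convex $t$. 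I would establish it first, either by citing the prox-regular result in \cite{Rockafellar2010} or by a direct argument: write $M_{\rho}f(x)=\frac{1}{2\rho}\Vert x\Vert^{2}-\sup_{y}\{\rho^{-1}\langle x,y\rangle-f(y)-\frac{1}{2\rho}\Vert y\Vert^{2}\}$. The supremum is the conjugate of a strongly convex function, hence differentiable, and Danskin's theorem gives the formula. Once this is in place, the claim $\nabla M_{\rho}f(\bar{x})=0$ is equivalent to $\bar{x}=\text{prox}_{\rho f}(\bar{x})$, i.e. $\bar{x}$ is a fixed point of the proximal map.

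Next I relate fixed points of the prox to Clarke stationarity. By strong convexity of $\psi_{\bar{x}}(y)\coloneqq f(y)+\frac{1}{2\rho}\Vert y-\bar{x}\Vert^{2}$, a point $y$ is its unique minimizer if and only if $0\in\partial_{\circ}\psi_{\bar{x}}(y)$. Here I use the sum rule \eqref{eq:clarke-subdifferential-decomposition}, extended to include the smooth quadratic, together with the fact that for a convex locally Lipschitz function the Clarke subdifferential coincides with the convex subdifferential. This is exactly the resolvent identity \eqref{eq:proximal-resolvency}. It yields
\begin{equation*}
0\in\partial_{\circ}\psi_{\bar{x}}(y)=\nabla g(y)+\partial_{\circ}h(y)+\rho^{-1}(y-\bar{x}).
\end{equation*}
Setting $y=\bar{x}$, we get $\bar{x}=\text{prox}_{\rho f}(\bar{x})$ if and only if $0\in\nabla g(\bar{x})+\partial_{\circ}h(\bar{x})=\partial_{\circ}f(\bar{x})$. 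Combined with the first step, this gives both directions of \eqref{eq:equiv-equilibrium}. Note the direction ($\Rightarrow$) uses the sufficiency of first-order conditions for the convex function $\psi_{\bar{x}}$; this is where $\rho<L_{\nabla g}^{-1}$ is genuinely needed.

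I expect the main obstacle to be rigor in two places. The first is the convexity of $\psi_{\bar{x}}$: one must check that $g+\frac{1}{2\rho}\Vert\cdot\Vert^{2}$ is convex using only $L_{\nabla g}$-Lipschitz continuity of $\nabla g$. I would do this via monotonicity, $\langle\nabla g(x)-\nabla g(y),x-y\rangle\geq-L_{\nabla g}\Vert x-y\Vert^{2}$. The second is the sum rule for Clarke subdifferentials of $\psi_{\bar{x}}$, which holds with equality here because one summand is $\mathcal{C}^{1}$. A further subtlety is that the paper's blanket claim that Moreau envelopes are differentiable is stated only for convex $t$. So differentiability of $M_{\rho}f$ and the gradient formula must be proved, or cited, specifically for this weakly convex $f$. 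I would handle this through the conjugate representation above.
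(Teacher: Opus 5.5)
Your proof is correct, but for the ($\Rightarrow$) direction it uses a different mechanism from the paper. The paper takes $\nabla M_{\rho}f=\rho^{-1}\left(I-\text{prox}_{\rho f}\right)$ for granted, even though the background section asserts this only for convex $t$. It writes the necessary condition $\nabla M_{\rho}f(\bar{x})\in\nabla g(\bar{x}-\rho\nabla M_{\rho}f(\bar{x}))+\partial_{\circ}h(\bar{x}-\rho\nabla M_{\rho}f(\bar{x}))$. For ($\Rightarrow$) it pairs this with $-\nabla g(\bar{x})\in\partial_{\circ}h(\bar{x})$ through monotonicity of $\partial_{\circ}h$. Cauchy--Schwarz and the Lipschitz bound on $\nabla g$ then give $\Vert\nabla M_{\rho}f(\bar{x})\Vert^{2}\leq\rho L_{\nabla g}\Vert\nabla M_{\rho}f(\bar{x})\Vert^{2}$, which forces the gradient to vanish. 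In effect, the paper shows that $\bar{x}$ and $\text{prox}_{\rho f}(\bar{x})$ are both zeros of the strongly monotone operator $y\mapsto\rho^{-1}(y-\bar{x})+\nabla g(y)+\partial_{\circ}h(y)$, so they coincide; this is uniqueness of critical points of $\psi_{\bar{x}}$. You instead use weak convexity of $g$ to make $\psi_{\bar{x}}$ strongly convex, so the critical point $\bar{x}$ is automatically its minimizer; this is sufficiency of the first-order condition. The ($\Leftarrow$) direction is the same in both.

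The two arguments use $\rho<L_{\nabla g}^{-1}$ in equivalent ways: strong monotonicity of the subdifferential versus strong convexity of the function. Your route adds rigor exactly where the paper is thin. Via the conjugate representation, you establish that $\text{prox}_{\rho f}$ is single-valued and that $M_{\rho}f$ is differentiable with the stated gradient for a nonconvex $f$. The same representation also gives Lipschitz continuity of $\nabla M_{\rho}f$ with an explicit constant. The paper's route is more elementary once the gradient formula is accepted, since it needs no conjugate or weak-convexity machinery.
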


\begin{proof}
Consider arbitrary $x\in\mathbb{R}^{p+1},\ \rho\in\left(0,L_{\nabla g}^{-1}\right)$.
As discussed previously, the gradient of the Moreau envelope $\nabla M_{\rho}f\left(x\right)=\rho^{-1}\left(x-\text{prox}_{\rho f}\left(x\right)\right)$
implies that 
\begin{equation}
\text{prox}_{\rho f}\left(x\right)=x-\rho\nabla M_{\rho}f\left(x\right),
\end{equation}
which implies the following first-order (necessary) optimality condition
for Clarke's stationary point: 
\begin{equation}
0\in\rho^{-1}\left(x-\rho\nabla M_{\rho}f\left(x\right)-x\right)+\partial_{\circ}f\left(x-\rho\nabla M_{\rho}f\left(x\right)\right).
\end{equation}
The relation above is simplified to 
\begin{equation}
\nabla M_{\rho}f\left(x\right)\in\partial_{\circ}f\left(x-\rho\nabla M_{\rho}f\left(x\right)\right)=\nabla g\left(x-\rho\nabla M_{\rho}f\left(x\right)\right)+\partial_{\circ}h\left(x-\rho\nabla M_{\rho}f\left(x\right)\right).\label{eq:moreau-gradient-subdifferential}
\end{equation}
Consider arbitrary $\bar{x}\in\mathbb{R}^{p+1},\rho\in\left(0,L_{\nabla g}^{-1}\right)$. 

``$\Rightarrow$'' of \eqref{eq:equiv-equilibrium}: 

Let $0\in\partial_{\circ}f\left(\bar{x}\right)=\nabla g\left(\bar{x}\right)+\partial_{\circ}h\left(\bar{x}\right)$;
i.e., $\bar{x}$ is a Clarke stationary point of $f$. Then $-\nabla g\left(\bar{x}\right)\in\partial_{\circ}h\left(\bar{x}\right)$.
Since $h$ is convex, \eqref{eq:moreau-gradient-subdifferential}
implies that 
\begin{equation}
\left\langle -\nabla g\left(\bar{x}\right)-\left(\nabla M_{\rho}f\left(\bar{x}\right)-\nabla g\left(\bar{x}-\rho\nabla M_{\rho}f\left(\bar{x}\right)\right)\right),\rho\nabla M_{\rho}f\left(\bar{x}\right)\right\rangle \geq0.
\end{equation}
Simplification gives 
\begin{equation}
\left\langle \nabla g\left(\bar{x}-\rho\nabla M_{\rho}f\left(\bar{x}\right)\right)-\nabla g\left(\bar{x}\right),\nabla M_{\rho}f\left(\bar{x}\right)\right\rangle \geq\left\Vert \nabla M_{\rho}f\left(\bar{x}\right)\right\Vert ^{2}.\label{eq:equiv-inequ-1}
\end{equation}
By Cauchy-Schwartz inequality, 
\begin{equation}
\left\langle \nabla g\left(\bar{x}-\rho\nabla M_{\rho}f\left(\bar{x}\right)\right)-\nabla g\left(\bar{x}\right),\nabla M_{\rho}f\left(\bar{x}\right)\right\rangle \leq L_{\nabla g}\cdot\rho\left\Vert \nabla M_{\rho}f\left(\bar{x}\right)\right\Vert ^{2}.\label{eq:equiv-inequ-2}
\end{equation}
Since $\rho<L_{\nabla g}^{-1}$, \eqref{eq:equiv-inequ-1} and \eqref{eq:equiv-inequ-2}
imply that 
\begin{equation}
\left\Vert \nabla M_{\rho}f\left(\bar{x}\right)\right\Vert ^{2}\leq\left\langle \nabla g\left(\bar{x}-\rho\nabla M_{\rho}f\left(\bar{x}\right)\right)-\nabla g\left(\bar{x}\right),\nabla M_{\rho}f\left(\bar{x}\right)\right\rangle <\left\Vert \nabla M_{\rho}f\left(\bar{x}\right)\right\Vert ^{2},
\end{equation}
which implies that 
\begin{equation}
\nabla M_{\rho}f\left(\bar{x}\right)=0;
\end{equation}
i.e., $\bar{x}$ is the stationary point of $M_{\rho}f$, hence a
Clarke's stationary point. 

``$\Leftarrow$'' of \eqref{eq:equiv-equilibrium}: 

Let $\nabla f_{\rho}\left(\bar{x}\right)=0$; i.e. $\bar{x}$ is a
stationary point of $M_{\rho}f$. It follows directly from \eqref{eq:moreau-gradient-subdifferential}
that 
\begin{equation}
0=\nabla M_{\rho}f\left(\bar{x}\right)\in\partial_{\circ}f\left(\bar{x}-\rho\nabla M_{\rho}f\left(\bar{x}\right)\right)=\partial_{\circ}f\left(\bar{x}\right);
\end{equation}
i.e., $\bar{x}$ is a Clarke stationary point of $f$. 
\end{proof}
The vast majority of optimization algorithms for smooth objective functions
require Lipschitz continuity of the objective function. Thus, we are
to propose the following Lemma to show the Lipschitz continuity of
the gradient of the Moreau envelope of $f$. 
\begin{lem}
\label{lem:moreau-Lipschitz} $\forall\rho\in\left(0,L_{\nabla g}^{-1}\right)$,
$\exists L_{\nabla M_{\rho}f}\in\mathbb{R}_{>0}$ such that 
\begin{equation}
\forall x,y\in\mathbb{R}^{p+1},\ \left\Vert \nabla M_{\rho}f\left(x\right)-\nabla M_{\rho}f\left(y\right)\right\Vert \leq L_{\nabla M_{\rho}f}\left\Vert x-y\right\Vert .
\end{equation}
\end{lem}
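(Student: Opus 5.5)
Since $\nabla M_{\rho}f(x)=\rho^{-1}\left(x-\text{prox}_{\rho f}(x)\right)$, it suffices to show that $\text{prox}_{\rho f}$ is single-valued and globally Lipschitz on $\mathbb{R}^{p+1}$ with some constant $L_{P}$. The bound $L_{\nabla M_{\rho}f}=\rho^{-1}(1+L_{P})$ then follows from the triangle inequality. The single-valuedness step also removes the slight abuse of notation the paper makes when treating $\text{prox}_{\rho f}$ as a function even though $f$ is nonconvex.

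\textbf{Step 1 (strong convexity of the prox subproblem).} Fix $x$ and consider $\Phi_{x}(y)\coloneqq g(y)+h(y)+\frac{1}{2\rho}\|y-x\|^{2}$. Because $\nabla g$ is $L_{\nabla g}$-Lipschitz, $g+\frac{L_{\nabla g}}{2}\|\cdot\|^{2}$ is convex, so $g$ is weakly convex with modulus $L_{\nabla g}$. Since $h$ is convex and $\rho^{-1}>L_{\nabla g}$, $\Phi_{x}$ is strongly convex with modulus $\mu\coloneqq\rho^{-1}-L_{\nabla g}>0$. It is also continuous and coercive, so it has a unique minimizer. Hence $\text{prox}_{\rho f}(x)$ is a singleton, characterized by $0\in\nabla g(y)+\partial_{\circ}h(y)+\rho^{-1}(y-x)$, which is the relation in \eqref{eq:moreau-gradient-subdifferential}. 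The same strong convexity gives differentiability of $M_{\rho}f$ with the stated gradient formula, via the standard envelope/Danskin argument.

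\textbf{Step 2 (Lipschitz bound on the prox).} Let $u=\text{prox}_{\rho f}(x)$ and $v=\text{prox}_{\rho f}(y)$. From the optimality conditions,
\[
\rho^{-1}(x-u)-\nabla g(u)\in\partial_{\circ}h(u),\qquad \rho^{-1}(y-v)-\nabla g(v)\in\partial_{\circ}h(v).
\]
Monotonicity of $\partial_{\circ}h$ for convex $h$ then yields
\[
\left\langle \rho^{-1}(x-y)-\rho^{-1}(u-v)-\left(\nabla g(u)-\nabla g(v)\right),\,u-v\right\rangle \geq0 .
\]
Bound $\langle\nabla g(u)-\nabla g(v),u-v\rangle\geq-L_{\nabla g}\|u-v\|^{2}$ using Cauchy--Schwarz and the Lipschitz property, the same manipulation as in \eqref{eq:equiv-inequ-2} in the proof of Lemma \ref{lem:equivalent-clarke-stationary-point}. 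This gives
\[
(\rho^{-1}-L_{\nabla g})\|u-v\|^{2}\leq\rho^{-1}\langle x-y,u-v\rangle\leq\rho^{-1}\|x-y\|\|u-v\|,
\]
so $\|u-v\|\leq(1-\rho L_{\nabla g})^{-1}\|x-y\|$ and $L_{P}=(1-\rho L_{\nabla g})^{-1}$.

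\textbf{Step 3 (assemble).} Combining the two steps,
\[
\|\nabla M_{\rho}f(x)-\nabla M_{\rho}f(y)\|\leq\rho^{-1}\left(\|x-y\|+\|u-v\|\right)\leq\rho^{-1}\left(1+\frac{1}{1-\rho L_{\nabla g}}\right)\|x-y\|.
\]
One could sharpen the constant using the inner-product inequality, but it is not needed.

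The main obstacle is Step 1. The paper's review of variational analysis states single-valuedness of the prox and the gradient formula only for convex $t$, while $f$ here is nonconvex. The restriction $\rho<L_{\nabla g}^{-1}$ is exactly what restores these properties. I would justify this by rewriting $f=\left(g+\frac{L_{\nabla g}}{2}\|\cdot\|^{2}\right)+\left(h-\frac{L_{\nabla g}}{2}\|\cdot\|^{2}\right)$, or more simply by absorbing the quadratic into the proximal term. In either form the problem reduces to a Moreau envelope of a convex function with a modified parameter plus a quadratic, where the convex-case facts apply directly.
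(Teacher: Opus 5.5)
Your proof is correct, and it starts from exactly the paper's inequality. With $u=x-\rho\nabla M_{\rho}f(x)$ and $v=y-\rho\nabla M_{\rho}f(y)$ one has $\delta_{x,y}-\rho\delta_{\nabla M_{\rho}f}=u-v$, so the paper's $\langle\delta_{\nabla M_{\rho}f}-\delta_{\nabla g},\delta_{x,y}-\rho\delta_{\nabla M_{\rho}f}\rangle\geq0$ is your monotonicity inequality. The difference is in how it is bounded. The paper works in the gradient differences, applies Cauchy--Schwarz to every cross term together with $\|\delta_{\nabla g}\|\leq L_{\nabla g}(\|\delta_{x,y}\|+\rho\|\delta_{\nabla M_{\rho}f}\|)$, and solves a quadratic inequality to get $\frac{2L_{\nabla g}\rho+1+\sqrt{8L_{\nabla g}\rho+1}}{2\rho(1-L_{\nabla g}\rho)}$. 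You keep the pairing with $u-v$ and use only the one-sided bound $\langle\nabla g(u)-\nabla g(v),u-v\rangle\geq-L_{\nabla g}\|u-v\|^{2}$. That gives a linear inequality, the intermediate fact that $\text{prox}_{\rho f}$ is $(1-\rho L_{\nabla g})^{-1}$-Lipschitz, and the constant $\frac{2-\rho L_{\nabla g}}{\rho(1-\rho L_{\nabla g})}$. Neither constant dominates: the paper's is smaller when $\rho L_{\nabla g}<1-\frac{\sqrt{2}}{2}$ and tends to $\rho^{-1}$ as $L_{\nabla g}\to0$, while yours is smaller beyond that. The lemma only needs existence, so this does not matter.

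What your route adds is Step 1. The paper uses single-valuedness of $\text{prox}_{\rho f}$ and $\nabla M_{\rho}f=\rho^{-1}(Id-\text{prox}_{\rho f})$ for the nonconvex $f$, although its review states these only for convex functions. Your strong-convexity argument with modulus $\rho^{-1}-L_{\nabla g}$ is what actually justifies them for $\rho<L_{\nabla g}^{-1}$.

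There is one slip in your closing remark. The splitting $f=(g+\frac{L_{\nabla g}}{2}\|\cdot\|^{2})+(h-\frac{L_{\nabla g}}{2}\|\cdot\|^{2})$ does not help, since the second summand need not be convex. The absorption version is the one that works, e.g.\ $M_{\rho}f(x)=\frac{1}{2\rho}\|x\|^{2}-\psi^{*}(x/\rho)$ with $\psi\coloneqq f+\frac{1}{2\rho}\|\cdot\|^{2}$ strongly convex. It delivers differentiability and the gradient formula at once.
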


\begin{proof}
Consider arbitrary $x,y\in\mathbb{R}^{p+1}$. From \eqref{eq:moreau-gradient-subdifferential},
since $h$ is convex, 
\begin{dmath}
\left\langle \nabla M_{\rho}f\left(x\right)-\nabla g\left(x-\rho\nabla M_{\rho}f\left(x\right)\right)-\left(\nabla M_{\rho}f\left(y\right)-\nabla g\left(y-\rho\nabla M_{\rho}f\left(y\right)\right)\right),x-\rho\nabla M_{\rho}f\left(x\right)-\left(y-\rho\nabla M_{\rho}f\left(y\right)\right)\right\rangle \geq0.
\end{dmath}
Simplification gives 
\begin{dmath}
\left\langle \nabla M_{\rho}f\left(x\right)-\nabla M_{\rho}f\left(y\right)-\left(\nabla g\left(x-\rho\nabla M_{\rho}f\left(x\right)\right)-\nabla g\left(y-\rho\nabla M_{\rho}f\left(y\right)\right)\right),x-y-\rho\left(\nabla M_{\rho}f\left(x\right)-\nabla M_{\rho}f\left(y\right)\right)\right\rangle \geq0.
\end{dmath}
Let $\delta_{\nabla M_{\rho}f}\coloneqq\nabla M_{\rho}f\left(x\right)-\nabla M_{\rho}f\left(y\right)$,
$\delta_{\nabla g}\coloneqq\nabla g\left(x-\rho\nabla f_{\rho}\left(x\right)\right)-\nabla g\left(y-\rho\nabla f_{\rho}\left(y\right)\right)$,
and $\delta_{x,y}\coloneqq x-y$, then 
\begin{align*}
0 & \leq\left\langle \delta_{\nabla M_{\rho}f}-\delta_{\nabla g},\delta_{x,y}-\rho\delta_{\nabla M_{\rho}f}\right\rangle \\
 & =-\rho\left\Vert \delta_{\nabla M_{\rho}f}\right\Vert ^{2}+\rho\left\langle \delta_{\nabla g},\delta_{\nabla M_{\rho}f}\right\rangle +\left\langle \delta_{\nabla M_{\rho}f},\delta_{x,y}\right\rangle -\left\langle \delta_{\nabla g},\delta_{x,y}\right\rangle \\
 & \leq-\rho\left\Vert \delta_{\nabla M_{\rho}f}\right\Vert ^{2}+\rho\left\Vert \delta_{\nabla g}\right\Vert \cdot\left\Vert \delta_{\nabla M_{\rho}f}\right\Vert +\left\Vert \delta_{\nabla M_{\rho}f}\right\Vert \cdot\left\Vert \delta_{x,y}\right\Vert +\left\Vert \delta_{\nabla g}\right\Vert \cdot\left\Vert \delta_{x,y}\right\Vert \\
 & \leq-\rho\left\Vert \delta_{\nabla M_{\rho}f}\right\Vert ^{2}+\rho L_{\nabla g}\left(\left\Vert \delta_{x,y}\right\Vert +\rho\left\Vert \delta_{\nabla M_{\rho}f}\right\Vert \right)\cdot\left\Vert \delta_{\nabla M_{\rho}f}\right\Vert \\
 & \qquad+\left\Vert \delta_{\nabla M_{\rho}f}\right\Vert \cdot\left\Vert \delta_{x,y}\right\Vert +L_{\nabla g}\left(\left\Vert \delta_{x,y}\right\Vert +\rho\left\Vert \delta_{\nabla M_{\rho}f}\right\Vert \right)\cdot\left\Vert \delta_{x,y}\right\Vert 
\end{align*}
Simplification of the above inequality gives 
\begin{equation}
\left\Vert \delta_{\nabla M_{\rho}f}\right\Vert \leq\frac{2L_{g}\rho+1+\sqrt{8L_{g}\rho+1}}{2\rho\left(1-L_{\nabla g}\rho\right)}\left\Vert \delta_{x,y}\right\Vert ;
\end{equation}
i.e., 
\begin{equation}
\left\Vert \nabla M_{\rho}f\left(x\right)-\nabla M_{\rho}f\left(y\right)\right\Vert \leq L_{\nabla M_{\rho}f}\left\Vert x-y\right\Vert ,
\end{equation}
where 
\begin{equation}
L_{\nabla M_{\rho}f}\coloneqq\frac{2L_{g}\rho+1+\sqrt{8L_{g}\rho+1}}{2\rho\left(1-L_{\nabla g}\rho\right)}>0.
\end{equation}
Following this idea, we introduce our proximal conjugate gradient
framework in Algorithm \ref{alg:proximal-general-moreau}. 
\end{proof}
\begin{algorithm}[H]
\begin{algorithmic}[1]   \State Input: A fixed value of $\rho\in\left(0,\rho^{-1}\right)$ \State Calculate the gradient of the Moreau envelope: $s^{\left( k\right)} \coloneqq \nabla M_{\rho}f\left(x^{\left( k\right)}\right)$   \State $d^{\left( k\right)} \coloneqq -s^{\left( k\right)}+\beta^{\left( k\right)}\cdot d^{\left(k-1\right)}$   \State Line search to find $\alpha^{\left( k\right)}$ for the update $x^{\left(k+1\right)}\coloneqq x^{\left( k\right)}+\alpha^{\left(k\right)} d^{\left(k\right)}$ \State Update $x^{\left(k+1\right)} \coloneqq x^{\left( k\right)}+\alpha^{\left(k\right)} d^{\left(k\right)}$ \end{algorithmic} 

\caption{Proximal Point Algorithm \label{alg:proximal-general-moreau} }
\end{algorithm}

In the above algorithm, $\beta^{\left(k\right)}$ is the conjugate parameter. The
significant meaning of Algorithm \ref{alg:proximal-general-moreau}
is that for any global convergent numerical method to find the equilibria of a globally Lipschitz flow, which generally include the global convergent first-order methods, Algorithm \ref{alg:proximal-general-moreau} can transform
such a method to a proximal counterpart. 

For some objective functions, the gradient of the Moreau envelope
can be calculated directly. However, calculation for the Moreau envelope's
gradient is not tractable for many objective functions whose smooth
component $g$ is of complicated form. Motivated by this, we further
consider the following the Moreau envelope of the objective function
with linearized $g$, such linearization step is frequently used in
proximal algorithms for statistical sparse learning problems (e.g.,
\cite{Nesterov2004,Ghadimi2013,Yang2024}). 

Consider the linearized surrogate of \eqref{eq:general-opt-problem}, the locally Lipschitz function 
$\tilde{f}\in\mathcal{C}^{0,0}\left(\mathbb{R}^{p+1},\mathbb{R}\right)$,
defined by 
\begin{align}
\tilde{f}\left(x;u\right) & \coloneqq\left\langle u,x\right\rangle +h\left(x\right)\label{eq:linearizing-nonconvex}\\
\text{prox}_{\rho\tilde{f}}\left(x;u\right) & =\underset{y\in\mathbb{R}^{p+1}}{\arg\min\ }\left\{ \left\langle u,y\right\rangle +\frac{1}{2\rho}\left\Vert y-x\right\Vert ^{2}+h\left(y\right)\right\} \label{eq:practical-proximal}\\
\nabla_{x}M_{\rho}\tilde{f}\left(x;u\right) & =\rho^{-1}\left(x-\text{prox}_{\rho\tilde{f}}\left(x;u\right)\right)\label{eq:practical-moreau-gradient}
\end{align}
$\text{prox}_{\rho\tilde{f}}\left(x;u\right)$ is the proximal operator
applied on $\tilde{f}$, and $\nabla_{x}M_{\rho}\tilde{f}\left(x;u\right)$
is the gradient of the Moreau envelope of $\tilde{f}$. The linearization
term $\left\langle u,x\right\rangle $ in \eqref{eq:linearizing-nonconvex}
depends on $u$. Recognize that $\tilde{f}\left(x;u\right)$ is linearizing
the nonconvex smooth component $g$ in \eqref{eq:opt-problem} when
$u=\nabla g\left(x\right)$. 

We establish several definitions for subsequent utilization. Define
the mapping $\tilde{g}_{\rho}=I-\rho\nabla g\in \mathcal{C}^{0,0}\left(\mathbb{R}^{p+1},\mathbb{R}^{p+1}\right)$
\emph{for some $\rho\in\left(0,L_{\nabla g}^{-1}\right)$}, the locally Lipschitz property of $\tilde{g}_{\rho}$ follows from $g\in \mathcal{C}^{1,1}$; i.e.,
$\tilde{g}_{\rho}\left(x\right)\coloneqq x-\rho\nabla g\left(x\right).$
The following Lemma identifies some fundamental property of $\tilde{g}_{\rho}$. 
\begin{lem}
\label{lem:bijectivity} $\tilde{g}_{\rho}$ is a bijective from $\mathbb{R}^{p+1}$
to $\mathbb{R}^{p+1}$, and $\tilde{g}_{\rho}^{-1}$ is globally Lipschitz
with constant $\left(1-\rho L_{\nabla g}\right)^{-1}$. 
\end{lem}

\begin{proof}
\emph{Injectivity proof: }

Consider arbitrary $x_{1},x_{2}\in\mathbb{R}^{p+1}$. Since $\rho\in\left(0,L_{\nabla g}^{-1}\right)$,
$x_{1}-\rho\nabla g\left(x_{1}\right)=x_{2}-\rho\nabla g\left(x_{2}\right)$
implies that 
\begin{equation}
\left\Vert x_{1}-x_{2}\right\Vert =\rho\left\Vert \nabla g\left(x_{1}\right)-\nabla g\left(x_{2}\right)\right\Vert \leq\rho L_{\nabla g}\left\Vert x_{1}-x_{2}\right\Vert <\left\Vert x_{1}-x_{2}\right\Vert ,
\end{equation}
hence $x_{1}=x_{2}$. This shows that $\tilde{g}_{\rho}$ is a injective
mapping. 

\emph{Surjectivity proof: }

Consider arbitrary $y_{1},y_{2}\in\mathbb{R}^{p+1}$. Consider arbitrary
$z\in\mathbb{R}^{p+1}$. Define mapping $\mathcal{T}\left(y\right)\coloneqq z+\rho\nabla g\left(y\right)$,
then 
\begin{align*}
\left\Vert \mathcal{T}\left(y_{1}\right)-\mathcal{T}\left(y_{2}\right)\right\Vert  & =\left\Vert z+\rho\nabla g\left(y_{1}\right)-\left(z+\rho\nabla g\left(y_{2}\right)\right)\right\Vert \\
 & =\rho\left\Vert \nabla g\left(y_{1}\right)-\nabla g\left(y_{2}\right)\right\Vert \\
 & \leq\rho L_{\nabla g}\left\Vert y_{1}-y_{2}\right\Vert \\
 & <\left\Vert y_{1}-y_{2}\right\Vert .
\end{align*}
Thus, $\mathcal{T}$ is a contraction mapping, since $\mathbb{R}^{p+1}$
equipped with Euclidean topology is a Banach space, by Banach fixed
point theorem, $\mathcal{T}$ has a fixed point; i.e., $\exists y\in\mathbb{R}^{p+1}$
such that $y=z+\rho\nabla g\left(y\right)$, or equivalently, $\tilde{g}_{\rho}\left(y\right)=y-\rho\nabla g\left(y\right)=z$.
Thus, $\mathbb{R}^{p+1}\subseteq\tilde{g}_{\rho}\left(\mathbb{R}^{p+1}\right)$. 

\emph{Globally Lipschitz constant derivation for inverse map: }

Since $\nabla g$ is globally $L_{\nabla g}-$Lipschitz, 
\begin{align}
\left\Vert \tilde{g}_{\rho}\left(y_{1}\right)-\tilde{g}_{\rho}\left(y_{2}\right)\right\Vert  & =\left\Vert y_{1}-\rho\nabla g\left(y_{1}\right)-\left(y_{2}-\rho\nabla g\left(y_{2}\right)\right)\right\Vert \nonumber \\
 & =\left\Vert y_{1}-y_{2}-\rho\left(\nabla g\left(y_{1}\right)-\nabla g\left(y_{2}\right)\right)\right\Vert \nonumber \\
 & \geq\left|\left\Vert y_{1}-y_{2}\right\Vert -\left\Vert \rho\left(\nabla g\left(y_{1}\right)-\nabla g\left(y_{2}\right)\right)\right\Vert \right|\nonumber \\
 & =\left|\left\Vert y_{1}-y_{2}\right\Vert -\rho\left\Vert \nabla g\left(y_{1}\right)-\nabla g\left(y_{2}\right)\right\Vert \right|\nonumber \\
 & =\left\Vert y_{1}-y_{2}\right\Vert -\rho\left\Vert \nabla g\left(y_{1}\right)-\nabla g\left(y_{2}\right)\right\Vert \label{eq:triangle-inequ}\\
 & \geq\left(1-\rho L_{\nabla g}\right)\left\Vert y_{1}-y_{2}\right\Vert \nonumber 
\end{align}
where \eqref{eq:triangle-inequ} is due to the fact that 
\begin{equation}
\rho\left\Vert \nabla g\left(y_{1}\right)-\nabla g\left(y_{2}\right)\right\Vert \leq\rho L_{\nabla g}\left\Vert y_{1}-y_{2}\right\Vert <\left\Vert y_{1}-y_{2}\right\Vert .
\end{equation}
Since $\tilde{g}_{\rho}$ is surjective, consider arbitrary $z_{1},z_{2}\in\mathbb{R}^{p+1}$
let $y_{1}\coloneqq\tilde{g}_{\rho}^{-1}\left(z_{1}\right)$ and $y_{2}\coloneqq\tilde{g}_{\rho}^{-1}\left(z_{2}\right)$,
then 
\begin{equation}
\left\Vert \tilde{g}_{\rho}^{-1}\left(z_{1}\right)-\tilde{g}_{\rho}^{-1}\left(z_{2}\right)\right\Vert \leq\left(1-\rho L_{\nabla g}\right)^{-1}\left\Vert z_{1}-z_{2}\right\Vert .
\end{equation}
\end{proof}
Define 
\begin{equation}
\mathcal{G}_{\rho\tilde{f}}\left(x\right)\coloneqq\nabla_{x}M_{\rho}\tilde{f}\left(x;u\right)\label{eq:target-gradient-flow}
\end{equation}
with $u=\nabla g\left(x\right)$; i.e., $\mathcal{G}_{\rho\tilde{f}}\left(x\right)$
is the gradient of the Moreau envelope of $\tilde{f}$. 

Similarly to Lemma \ref{lem:equivalent-clarke-stationary-point} and
\ref{lem:moreau-Lipschitz}, we are to prove that the set of Clarke's
stationary of \eqref{eq:opt-problem} is identical to the set $\left\{ \bar{x}\in\mathbb{R}^{p+1}\vert\mathcal{G}_{\rho\tilde{f}}\left(\bar{x}\right)=0\right\} $
in Lemma \ref{lem:equivalent-clarke-stationary-point-pratical},
and then we are to show that \eqref{eq:target-gradient-flow} is globally
Lipschitz in Lemma \ref{lem:moreau-Lipschitz-practical}. 
\begin{lem}
\label{lem:equivalent-clarke-stationary-point-pratical} $\forall\bar{x}\in\mathbb{R}^{p+1},\rho\in\mathbb{R}_{>0}$,
\begin{equation}
0\in\partial_{\circ}f\left(\bar{x}\right)\Leftrightarrow\mathcal{G}_{\rho\tilde{f}}\left(\bar{x}\right)=0.\label{eq:equiv-equilibrium-practical}
\end{equation}
\end{lem}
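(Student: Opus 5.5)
The plan is to characterize $\mathcal{G}_{\rho\tilde{f}}(\bar{x})$ through the proximal optimality condition for the convex surrogate $\tilde{f}(\cdot;u)$ with $u=\nabla g(\bar{x})$ frozen, and then compare that condition with Clarke stationarity of $f$. Because $\tilde{f}(\cdot;u)=\langle u,\cdot\rangle+h$ is convex for fixed $u$, $\text{prox}_{\rho\tilde{f}}(\bar{x};u)$ is a singleton. By \eqref{eq:practical-moreau-gradient} and the resolvent identity \eqref{eq:proximal-resolvency}, the point $z\coloneqq\text{prox}_{\rho\tilde{f}}(\bar{x};u)=\bar{x}-\rho\,\mathcal{G}_{\rho\tilde{f}}(\bar{x})$ is characterized by
\begin{equation*}
\mathcal{G}_{\rho\tilde{f}}\left(\bar{x}\right)\in\nabla g\left(\bar{x}\right)+\partial_{\circ}h\left(\bar{x}-\rho\,\mathcal{G}_{\rho\tilde{f}}\left(\bar{x}\right)\right).
\end{equation*}
This is the analogue of \eqref{eq:moreau-gradient-subdifferential}. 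The key difference is that the gradient of $g$ is evaluated at $\bar{x}$ itself, not at the shifted point. Equivalently, \eqref{eq:Moreau-envelope-affine-addition} gives $\text{prox}_{\rho\tilde{f}}(\bar{x};u)=\text{prox}_{\rho h}(\bar{x}-\rho\nabla g(\bar{x}))=\text{prox}_{\rho h}(\tilde{g}_{\rho}(\bar{x}))$, so $\mathcal{G}_{\rho\tilde{f}}$ is the usual proximal-gradient mapping.

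For ``$\Leftarrow$'', suppose $\mathcal{G}_{\rho\tilde{f}}(\bar{x})=0$. The display then reads $0\in\nabla g(\bar{x})+\partial_{\circ}h(\bar{x})$. Using the sum rule $\partial_{\circ}f=\nabla g+\partial_{\circ}h$ recalled in Section \ref{subsec:variational-analysis}, this says exactly $0\in\partial_{\circ}f(\bar{x})$.

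For ``$\Rightarrow$'', suppose $0\in\partial_{\circ}f(\bar{x})$, so $-\nabla g(\bar{x})\in\partial_{\circ}h(\bar{x})$. Then $\bar{x}+\rho(-\nabla g(\bar{x}))\in(I+\rho\partial_{\circ}h)(\bar{x})$, that is, $\tilde{g}_{\rho}(\bar{x})\in(I+\rho\partial_{\circ}h)(\bar{x})$. Since $h$ is convex, $(I+\rho\partial_{\circ}h)^{-1}=\text{prox}_{\rho h}$ is single-valued, so $\text{prox}_{\rho h}(\tilde{g}_{\rho}(\bar{x}))=\bar{x}$. Substituting into $\mathcal{G}_{\rho\tilde{f}}(\bar{x})=\rho^{-1}(\bar{x}-\text{prox}_{\rho h}(\tilde{g}_{\rho}(\bar{x})))$ gives zero. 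A second route, parallel to the proof of Lemma \ref{lem:equivalent-clarke-stationary-point}, is to use monotonicity of $\partial_{\circ}h$ on the pair $\bar{x}$ and $\bar{x}-\rho\,\mathcal{G}_{\rho\tilde{f}}(\bar{x})$. This yields $\langle -\mathcal{G}_{\rho\tilde{f}}(\bar{x}),\rho\,\mathcal{G}_{\rho\tilde{f}}(\bar{x})\rangle\ge 0$, hence $\|\mathcal{G}_{\rho\tilde{f}}(\bar{x})\|^{2}\le0$. The $\nabla g$ terms cancel exactly because $g$ is linearized at $\bar{x}$. This cancellation is why no restriction $\rho<L_{\nabla g}^{-1}$ is needed, matching the statement's $\rho\in\mathbb{R}_{>0}$.

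The main obstacle is bookkeeping rather than analysis. The linearization point $u$ must be fixed at $\nabla g(\bar{x})$ before taking the Moreau envelope, so that $\nabla_{x}M_{\rho}\tilde{f}$ is the envelope gradient in $x$ with $u$ frozen. The proof must also justify that the prox of the convex function $\tilde{f}(\cdot;u)$ is single-valued and satisfies the resolvent identity. Convexity of $h$ settles both points. With that in place, both directions reduce to one line each.
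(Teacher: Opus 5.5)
Your proposal is correct and follows the paper's proof. Both derive the inclusion $\mathcal{G}_{\rho\tilde{f}}(x)\in\nabla g(x)+\partial_{\circ}h\bigl(x-\rho\,\mathcal{G}_{\rho\tilde{f}}(x)\bigr)$ and read off ``$\Leftarrow$'' directly from it. Both also obtain ``$\Rightarrow$'' from monotonicity of $\partial_{\circ}h$: the paper proves $\|\mathcal{G}_{\rho\tilde{f}}(x)\|\le\|\nabla g(x)+v\|$ for every $v\in\partial_{\circ}h(x)$ and then takes $v=-\nabla g(\bar{x})$, which is exactly your second route.

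Your resolvent argument for ``$\Rightarrow$'' is an equivalent shortcut, since single-valuedness of $(I+\rho\partial_{\circ}h)^{-1}$ rests on the same monotonicity. You are also right that freezing $u=\nabla g(\bar{x})$ makes the $\nabla g$ terms cancel, which is why no restriction $\rho<L_{\nabla g}^{-1}$ is needed.
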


\begin{proof}
Consider arbitrary $x\in \mathbb{R}^{p+1}$. The $\tilde{f}$ is convex since it is a sum of convex function $h$
and a linear mapping of $x$, which is convex. 
\begin{align}
\mathcal{G}_{\rho\tilde{f}}\left(x\right) & =\rho^{-1}\left(x-\text{prox}_{\rho\tilde{f}}\left(x;\nabla g\left(x\right)\right)\right)\label{eq:convex-moreau-envelope-grad}\\
 & =\rho^{-1}\left(x-\text{prox}_{\rho h}\left(x-\rho\nabla g\left(x\right)\right)\right)\label{eq:moreau-envelope-grad-decomposition}\\
 & =\nabla g\left(x\right)+\rho^{-1}\left(x-\rho\nabla g\left(x\right)-\text{prox}_{\rho h}\left(x-\rho\nabla g\left(x\right)\right)\right)\nonumber \\
 & =\nabla g\left(x\right)+\left(\nabla M_{\rho}h\right)\circ\tilde{g}_{\rho}\left(x\right)\label{eq:moreau-envelope-grad-decomposition-final-form}
\end{align}
\eqref{eq:moreau-envelope-grad-decomposition} is due to the affine
addition property of proximal mapping. From \eqref{eq:linearizing-nonconvex}
and \eqref{eq:convex-moreau-envelope-grad}, 
\begin{align}
\mathcal{G}_{\rho\tilde{f}}\left(x\right) & =\rho^{-1}\left(x-\text{prox}_{\rho\tilde{f}}\left(x,\nabla g\left(x\right)\right)\right)\nonumber \\
\implies\text{prox}_{\rho\tilde{f}}\left(x,\nabla g\left(x\right)\right) & =x-\rho\cdot\mathcal{G}_{\rho\tilde{f}}\left(x\right)\nonumber \\
\implies0 & \in\rho^{-1}\left(x-\rho\cdot\mathcal{G}_{\rho\tilde{f}}\left(x\right)-x\right)+\partial_{\circ}\tilde{f}\left(x-\rho\cdot\mathcal{G}_{\rho\tilde{f}}\left(x\right)\right)\nonumber \\
\implies0 & \in-\mathcal{G}_{\rho\tilde{f}}\left(x\right)+\nabla g\left(x\right)+\partial_{\circ}h\left(x-\rho\cdot\mathcal{G}_{\rho\tilde{f}}\left(x\right)\right)\nonumber \\
\implies\mathcal{G}_{\rho\tilde{f}}\left(x\right) & \in\nabla g\left(x\right)+\partial_{\circ}h\left(x-\rho\cdot\mathcal{G}_{\rho\tilde{f}}\left(x\right)\right)\label{eq:practical-subdifferential}\\
\implies\mathcal{G}_{\rho\tilde{f}}\left(x\right)-\nabla g\left(x\right) & \in\partial_{\circ}h\left(x-\rho\cdot\mathcal{G}_{\rho\tilde{f}}\left(x\right)\right).\nonumber 
\end{align}
Thus, since $h$ is convex, $\forall v\in\partial_{\circ}h\left(x\right)$,
\begin{align}
\left\langle \mathcal{G}_{\rho\tilde{f}}\left(x\right)-\nabla g\left(x\right)-v,x-\rho\cdot\mathcal{G}_{\rho\tilde{f}}\left(x\right)-x\right\rangle  & \geq0\nonumber \\
\implies\left\langle \mathcal{G}_{\rho\tilde{f}}\left(x\right)-\nabla g\left(x\right)-v,\mathcal{G}_{\rho\tilde{f}}\left(x\right)\right\rangle  & \leq0\nonumber \\
\implies\left\Vert \mathcal{G}_{\rho\tilde{f}}\left(x\right)\right\Vert ^{2} & \leq\left\langle \nabla g\left(x\right)+v,\mathcal{G}_{\rho\tilde{f}}\left(x\right)\right\rangle \label{eq:moreau-gradient-CS}\\
 & \leq\left\Vert \nabla g\left(x\right)+v\right\Vert \cdot\left\Vert \mathcal{G}_{\rho\tilde{f}}\left(x\right)\right\Vert \nonumber \\
\implies\left\Vert \mathcal{G}_{\rho\tilde{f}}\left(x\right)\right\Vert  & \leq\left\Vert \nabla g\left(x\right)+v\right\Vert ,\label{eq:moreau-gradient-minimal}
\end{align}
provided that $\left\Vert \mathcal{G}_{\rho\tilde{f}}\left(x\right)\right\Vert \neq0$.
Basic results on the Moreau envelope shows that $\mathcal{G}_{\rho\tilde{f}}\left(x\right)=0$
implies that $x$ is a Clarke stationary point of $\tilde{f}\left(x\right)$. 

Now we are proceed to prove \eqref{eq:equiv-equilibrium-practical}: 

``$\Rightarrow$'':

Consider arbitrary $\bar{x}\in\mathbb{R}^{p+1}$ and $\rho\in\mathbb{R}_{>0}$.
Let $0\in\partial_{\circ}f\left(\bar{x}\right)=\nabla g\left(\bar{x}\right)+\partial_{\circ}h\left(\bar{x}\right)$;
i.e., $\bar{x}$ is a Clarke stationary point of $f$. Then $\exists v\in\partial_{\circ}h\left(\bar{x}\right)$
such that $\nabla g\left(\bar{x}\right)+v=0$. \eqref{eq:moreau-gradient-minimal}
implies that 
\begin{equation}
\left\Vert \mathcal{G}_{\rho\tilde{f}}\left(\bar{x}\right)\right\Vert \leq\left\Vert \nabla g\left(\bar{x}\right)+v\right\Vert =0.
\end{equation}
Thus, $\mathcal{G}_{\rho\tilde{f}}\left(\bar{x}\right)=0$. 

``$\Leftarrow$'': 

Consider arbitrary $\bar{x}\in\mathbb{R}^{p+1}$. Let $\mathcal{G}_{\rho\tilde{f}}\left(\bar{x}\right)=0$;
i.e., $\mathcal{G}_{\rho\tilde{f}}\left(\bar{x}\right)=0$ is stationary.
\eqref{eq:practical-subdifferential} implies that 
\begin{equation}
0=\mathcal{G}_{\rho\tilde{f}}\left(\bar{x}\right)\in\nabla g\left(\bar{x}\right)+\partial_{\circ}h\left(\bar{x}-\rho\cdot\mathcal{G}_{\rho\tilde{f}}\left(\bar{x}\right)\right)=\nabla g\left(\bar{x}\right)+\partial_{\circ}h\left(\bar{x}\right)=\partial_{\circ}f\left(\bar{x}\right).
\end{equation}
Thus, $\bar{x}$ is a Clarke stationary point of $f$. 
\end{proof}
\begin{lem}
\label{lem:moreau-Lipschitz-practical} $\forall\rho\in\mathbb{R}_{>0}$,
$\exists L_{\mathcal{G}_{\rho\tilde{f}}}\in\mathbb{R}_{>0}$ such
that 
\begin{equation}
\forall x,y\in\mathbb{R}^{p+1},\ \left\Vert \mathcal{G}_{\rho\tilde{f}}\left(x\right)-\mathcal{G}_{\rho\tilde{f}}\left(y\right)\right\Vert \leq L_{\mathcal{G}_{\rho\tilde{f}}}\left\Vert x-y\right\Vert .
\end{equation}
\end{lem}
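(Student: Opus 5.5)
We will use the closed form \eqref{eq:moreau-envelope-grad-decomposition} obtained in the proof of Lemma \ref{lem:equivalent-clarke-stationary-point-pratical}:
\[
\mathcal{G}_{\rho\tilde{f}}\left(x\right)=\rho^{-1}\left(x-\text{prox}_{\rho h}\left(\tilde{g}_{\rho}\left(x\right)\right)\right),\qquad \tilde{g}_{\rho}\left(x\right)=x-\rho\nabla g\left(x\right).
\]
This reduces the claim to Lipschitz estimates for three simple maps: the identity, $\text{prox}_{\rho h}$, and $\tilde{g}_{\rho}$. The point is that, unlike Lemma \ref{lem:moreau-Lipschitz}, the linearization has moved all nonconvexity into an explicit composition. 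No implicit inequality in $\nabla M_{\rho}f$ has to be solved, and so no restriction $\rho<L_{\nabla g}^{-1}$ is needed.

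The steps, for arbitrary $x,y\in\mathbb{R}^{p+1}$, are as follows.

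\textbf{Step 1.} Since $h$ is convex, proper and continuous, $\text{prox}_{\rho h}$ is single-valued and firmly non-expansive, as recalled in Section \ref{subsec:variational-analysis}. In particular it is $1$-Lipschitz:
\[
\left\Vert \text{prox}_{\rho h}(a)-\text{prox}_{\rho h}(b)\right\Vert \leq\left\Vert a-b\right\Vert .
\]

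\textbf{Step 2.} Since $\nabla g$ is globally $L_{\nabla g}$-Lipschitz,
\[
\left\Vert \tilde{g}_{\rho}(x)-\tilde{g}_{\rho}(y)\right\Vert \leq\left(1+\rho L_{\nabla g}\right)\left\Vert x-y\right\Vert .
\]
This is the forward direction only. Lemma \ref{lem:bijectivity} concerns the inverse map and is not needed here.

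\textbf{Step 3.} Combine Steps 1 and 2 with the triangle inequality:
\[
\left\Vert \mathcal{G}_{\rho\tilde{f}}(x)-\mathcal{G}_{\rho\tilde{f}}(y)\right\Vert \leq\rho^{-1}\left(\left\Vert x-y\right\Vert +\left(1+\rho L_{\nabla g}\right)\left\Vert x-y\right\Vert \right).
\]
This yields the constant
\[
L_{\mathcal{G}_{\rho\tilde{f}}}\coloneqq\frac{2}{\rho}+L_{\nabla g}>0 .
\]

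\textbf{Optional refinement.} One could instead use the form \eqref{eq:moreau-envelope-grad-decomposition-final-form}, $\nabla g+\left(\nabla M_{\rho}h\right)\circ\tilde{g}_{\rho}$, together with the $\rho^{-1}$-Lipschitz property of $\nabla M_{\rho}h$ (Moreau decomposition). This gives $L_{\nabla g}+\rho^{-1}(1+\rho L_{\nabla g})$, which is the same order. A sharper bound could come from firm non-expansiveness of $I-\text{prox}_{\rho h}$, writing $x-\text{prox}_{\rho h}(\tilde{g}_{\rho}(x))=\rho\nabla g(x)+(I-\text{prox}_{\rho h})(\tilde{g}_{\rho}(x))$. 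The lemma only asks for existence of a constant, so the cruder bound suffices.

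\textbf{Main obstacle.} The only substantive point is justifying the closed form, namely that $\text{prox}_{\rho\tilde{f}}(x;\nabla g(x))=\text{prox}_{\rho h}(x-\rho\nabla g(x))$. Here $u=\nabla g(x)$ is frozen inside the minimization in \eqref{eq:practical-proximal}. Completing the square,
\[
\left\langle u,y\right\rangle +\frac{1}{2\rho}\left\Vert y-x\right\Vert ^{2}=\frac{1}{2\rho}\left\Vert y-(x-\rho u)\right\Vert ^{2}+\text{const},
\]
so the two minimization problems have the same argmin. The $x$-dependence of $u$ is then harmless, because we never differentiate through it. We only compose Lipschitz maps, and the single-valuedness of $\text{prox}_{\rho h}$ makes $\mathcal{G}_{\rho\tilde{f}}$ a well-defined function.
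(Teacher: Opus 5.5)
Your proof is correct, but it uses a different decomposition from the paper's and gets a weaker constant. The paper inserts the intermediate quantity $\nabla_{x}M_{\rho}\tilde{f}(x;v)$ with $v=\nabla g(y)$, which splits the difference into two pieces:
\begin{itemize}
\item changing the linearization from $u=\nabla g(x)$ to $v$ at a fixed base point costs at most $\Vert u-v\Vert\leq L_{\nabla g}\Vert x-y\Vert$, because the prox is nonexpansive in its linear term (the cited Ghadimi--Lan lemma);
\item moving the base point from $x$ to $y$ with the linearization frozen costs $\rho^{-1}\Vert x-y\Vert$, because $\rho^{-1}(I-\text{prox}_{\rho\tilde{f}(\cdot;v)})$ is the gradient of the Moreau envelope of a convex function.
\end{itemize}
This gives $L_{\nabla g}+\rho^{-1}$.

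You instead bound $\rho^{-1}(x-y)$ and $\rho^{-1}\bigl(\text{prox}_{\rho h}(\tilde{g}_{\rho}(x))-\text{prox}_{\rho h}(\tilde{g}_{\rho}(y))\bigr)$ separately. The displacement $x-y$ is then counted twice, once through the identity and once through $\tilde{g}_{\rho}$. You also lose the cancellation encoded in the nonexpansiveness of $I-\text{prox}_{\rho h}$, which is why you end up with $2\rho^{-1}+L_{\nabla g}$.

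Your route is fully self-contained: everything follows from the completing-the-square identity and the $1$-Lipschitz property of $\text{prox}_{\rho h}$, with no appeal to an external lemma. You also check the closed form explicitly, which the paper only attributes to an ``affine addition property.'' The paper's route buys the sharper constant. For the lemma as stated, existence is all that is needed, so both are adequate.

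You can recover the paper's constant inside your own framework. Compare against the intermediate point $\rho^{-1}\bigl(x-\text{prox}_{\rho h}(x-\rho\nabla g(y))\bigr)$, which is exactly the paper's split. Alternatively, carry out the firm-nonexpansiveness refinement you hinted at, by writing $I-\text{prox}_{\rho h}=\tfrac{1}{2}(I+R)$ with $R$ nonexpansive. Either way you get $L_{\nabla g}+\rho^{-1}$.
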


\begin{proof}
Consider arbitrary $x,y\in\mathbb{R}^{p+1}$ and $\rho\in\mathbb{R}_{>0}$.
Let $u\coloneqq\nabla g\left(x\right)$ and $v\coloneqq\nabla g\left(y\right)$,
\begin{align}
\left\Vert \mathcal{G}_{\rho\tilde{f}}\left(x\right)-\mathcal{G}_{\rho\tilde{f}}\left(y\right)\right\Vert  & =\left\Vert \nabla_{x}M_{\rho}\tilde{f}\left(x;u\right)-\nabla_{y}M_{\rho}\tilde{f}\left(y;v\right)\right\Vert \nonumber \\
 & =\left\Vert \nabla_{x}M_{\rho}\tilde{f}\left(x;u\right)-\nabla_{x}M_{\rho}\tilde{f}\left(x;v\right)+\nabla_{x}M_{\rho}\tilde{f}\left(x;v\right)-\nabla_{y}M_{\rho}\tilde{f}\left(y;v\right)\right\Vert \nonumber \\
 & \leq\left\Vert \nabla_{x}M_{\rho}\tilde{f}\left(x;u\right)-\nabla_{x}M_{\rho}\tilde{f}\left(x;v\right)\right\Vert +\left\Vert \nabla_{x}M_{\rho}\tilde{f}\left(x;v\right)-\nabla_{y}M_{\rho}\tilde{f}\left(y;v\right)\right\Vert \nonumber \\
 & \leq\left\Vert u-v\right\Vert +\left\Vert \nabla_{x}M_{\rho}\tilde{f}\left(x;v\right)-\nabla_{y}M_{\rho}\tilde{f}\left(y;v\right)\right\Vert \label{eq:lemma-4-result}\\
 & \leq\left\Vert u-v\right\Vert + \rho^{-1}\left\Vert x-y\right\Vert \label{eq:convex-moreau-grad-nonexpansive}\\
 & \leq L_{\nabla g}\left\Vert x-y\right\Vert + \rho^{-1}\left\Vert x-y\right\Vert \nonumber \\
 & =\left(L_{\nabla g}+\rho^{-1}\right)\left\Vert x-y\right\Vert \nonumber 
\end{align}
\eqref{eq:lemma-4-result} is due to Lemma 4 in \cite{Ghadimi2013},
and \eqref{eq:convex-moreau-grad-nonexpansive} is due to $\tilde{f}\left(\cdot;v\right)$
is convex and the fact that the gradient of a convex function's Moreau
envelope is $\rho^{-1}-$Lipschitz. Therefore, let 
\begin{equation}
L_{\mathcal{G}_{\rho\tilde{f}}}\coloneqq L_{\nabla g}+\rho^{-1}
\end{equation}
and we have 
\begin{equation}
\left\Vert \mathcal{G}_{\rho\tilde{f}}\left(x\right)-\mathcal{G}_{\rho\tilde{f}}\left(y\right)\right\Vert \leq L_{\mathcal{G}_{\rho\tilde{f}}}\left\Vert x-y\right\Vert .
\end{equation}
\end{proof}

Furthermore, \eqref{eq:moreau-envelope-grad-decomposition-final-form}
suggests that 
\begin{equation}
\mathcal{G}_{\rho\tilde{f}}=\nabla g+\left(\nabla M_{\rho}h\right)\circ\tilde{g}_{\rho}=\nabla g+\left(\nabla M_{\rho}h\right)\circ\left(Id-\rho\nabla g\right).
\end{equation}
Hence, 
\begin{align}
Id-\rho\mathcal{G}_{\rho\tilde{f}} & =Id-\rho\nabla g-\rho\left(\nabla M_{\rho}h\right)\circ\left(Id-\rho\nabla g\right)\nonumber \\
 & =\tilde{g}_{\rho}-\rho\left(\nabla M_{\rho}h\right)\circ\tilde{g}_{\rho}\nonumber \\
 & =\left(Id-\rho\left(\nabla M_{\rho}h\right)\right)\circ\tilde{g}_{\rho}\label{eq:flow-composition}\\
 & =\tilde{g}_{\rho}^{-1}\circ\left(\tilde{g}_{\rho}\circ\left(Id-\rho\left(\nabla M_{\rho}h\right)\right)\right)\circ\tilde{g}_{\rho}\label{eq:topologically-conjugate}
\end{align}
shows that $\tilde{g}_{\rho}^{-1}\circ\left(\tilde{g}_{\rho}\circ\left(Id-\rho\left(\nabla M_{\rho}h\right)\right)\right)\circ\tilde{g}_{\rho}:\mathbb{R}^{p+1}\mapsto\mathbb{R}^{p+1}$
equals to $Id-\rho\mathcal{G}_{\rho\tilde{f}}:\mathbb{R}^{p+1}\mapsto\mathbb{R}^{p+1}$.
Since $\tilde{g}_{\rho}$ ibijectivee, and that $\tilde{g}_{\rho}$
and $\tilde{g}_{\rho}^{-1}$ are continuous due to the globally Lipschitz
property from Lemma \ref{lem:bijectivity}, $\tilde{g}_{\rho}$ is
a homeomorphism. Hence, $Id-\rho\mathcal{G}_{\rho\tilde{f}}$ and
$\tilde{g}_{\rho}\circ\left(Id-\rho\left(\nabla M_{\rho}h\right)\right)$
are topologically equivalent  mappings via the homeomorphism $\tilde{g}_{\rho}$.
Lemma \ref{lem:moreau-Lipschitz-practical} implies that $\mathcal{G}_{\rho\tilde{f}}$
is globally Lipschitz, which sufficiently implies by the Cauchy-Lipschitz
theorem that the differential equation 
\begin{equation}
\dot{x}\coloneqq\frac{dx}{dt}=\mathcal{G}_{\rho\tilde{f}}\left(x\right)
\end{equation}
has a unique solution for any given initial value condition. Thus,
$\mathcal{G}_{\rho\tilde{f}}$ generates a unique flow under a given
initial value condition. 

The operator equations presented above can be understood as demonstrating
how $\mathcal{G}_{\rho\tilde{f}}$ functions analogously to a gradient
operator. Specifically, $Id-\rho\mathcal{G}_{\rho\tilde{f}}$ represents
executing a descent operation in the $-\mathcal{G}_{\rho\tilde{f}}$
direction with a step size $\rho$. Similarly, $Id-\rho\left(\nabla M_{\rho}h\right)$
represents a single gradient descent step with $\rho$ as the step
size with objective function $M_{\rho}h$, the Moreau envelope of
$h$; while $\tilde{g}_{\rho}=Id-\rho\nabla g$ reflects a gradient
descent step with objective function $g$, again with $\rho$ as the
step size. Equation \eqref{eq:flow-composition} elucidates that a
descent in the $-\mathcal{G}_{\rho\tilde{f}}$ direction is identical
to first performing a one-step gradient descent on $g$, followed
by $M_{\rho}h$; or performing gradient descents in a converse order
yields a topologically equivalence via the homeomorphism $\tilde{g}_{\rho}$,
as shown in \eqref{eq:topologically-conjugate}. 

In short summary, the approach based on linearization of the smooth term and the Moreau envelope
enables us to build equivalence between identifying Clarke stationary
points of the original nonsmooth objective function \eqref{eq:opt-problem}
and finding equilibria of the (unique) flow generated by $\mathcal{G}_{\rho\tilde{f}}$,
as demonstrated in Lemma \ref{lem:equivalent-clarke-stationary-point-pratical}.
The task of finding equilibria within a globally Lipschitz continuous
flow, such as the $\mathcal{G}_{\rho\tilde{f}}$ flow, is well explored
within mathematics, particularly in the realms of dynamical systems
and numerical analysis (see, for example, \cite{Quarteroni2007,Atkinson1989,Lubich2006,Hubbard1995,Helmke1994}).
Cauchy-Lipschitz theorem establishes the uniqueness of solutions to
initial value problems for globally Lipschitz continuous flows; while the existence of equilibria is a direct result of Brouwer fixed-point theorem. Numerical
methods for dynamical systems, including methods for finding equilibria
of the flow, are largely based on this uniqueness result. This is
one reason that the vast majority of numerical methods in the context
of dynamical systems require the flow to be globally Lipschitz. It is
important to note that these numerical strategies, widely applied
across dynamical systems, do not hinge on the flow being derived from
a conservative field. As such, the process of formulating a potential
function for $\mathcal{G}_{\rho\tilde{f}}$ is not a prerequisite
for employing numerical techniques to determine its equilibria. This
perspective underscores the versatility of numerical methods in dynamical
systems in finding the equilibria of flows, regardless of the explicit
existence of a potential function, a stance corroborated by various
sources in the literature \cite{Quarteroni2007,Atkinson1989,Lubich2006,Hubbard1995,Helmke1994,Ross2019,Riahi2018}.
In this view, the construction of a potential function for $\mathcal{G}_{\rho\tilde{f}}$
is generally not necessary when deploying numerical analysis methods
to find its equilibria. 

In the context of nonlinear conjugate gradient algorithms for optimization,
achieving global convergence on nonconvex objective functions that
are globally Lipschitz-smooth implies that such methods can reliably
find equilibria within the corresponding flow dynamics \cite{Ross2019,Riahi2018}.
These algorithms typically incorporate a line search step, which may
use a surrogate objective function instead of the original. This surrogate
can be a constructed potential, Lyapunov, or energy function, offering
flexibility when finding the potential function for $\mathcal{G}_{\rho\tilde{f}}$
poses challenges \cite{Ross2019,Clarke2004,Sontag1998}. 

When it is feasible to construct a potential function whose gradient with respect
to $x$ is $\mathcal{G}_{\rho\tilde{f}}$, the associated
objective function and its gradient become more manageable, allowing
for direct global convergence arguments. If constructing a potential
function with respect to $x$ for the $\left(\nabla M_{\rho}h\right)\circ\tilde{g}_{\rho}\left(x\right)$
term in \eqref{eq:moreau-envelope-grad-decomposition-final-form}
or $\nabla g\circ\left(Id-\rho\left(\nabla M_{\rho}h\right)\right)$
in \eqref{eq:topologically-conjugate} is tractable, the objective
function with gradient being \eqref{eq:moreau-envelope-grad-decomposition-final-form}
or $\tilde{g}_{\rho}\circ\left(Id-\rho\left(\nabla M_{\rho}h\right)\right)$
can hence be easily constructed. Thus, arguments for global convergence for methods
based on the objective function and its gradient directly follow
to prove the global convergence of the numerical optimization algorithm when
applied to the constructed potential function for $\mathcal{G}_{\rho\tilde{f}}$.
We remark that $Id-\rho\mathcal{G}_{\rho\tilde{f}}$ and $\tilde{g}_{\rho}\circ\left(Id-\rho\left(\nabla M_{\rho}h\right)\right)$
generate two topologically equivalent flows via homeomorphism $\tilde{g}_{\rho}$;
thus, their equilibria can be transformed by $\tilde{g}_{\rho}$ and
share the same stability. In the context of numerical optimization, this implies
that a fixed point $\bar{x}$ for the mapping $Id-\rho\mathcal{G}_{\rho\tilde{f}}$
corresponds bijectively to a fixed point $\tilde{g}_{\rho}\left(\bar{x}\right)$
for $\tilde{g}_{\rho}\circ\left(Id-\rho\left(\nabla M_{\rho}h\right)\right)$.
Characterized by the first-order optimality condition in optimization
of smooth functions, or equivalently, the stationary condition in dynamical
system, 
\begin{equation}
\mathcal{G}_{\rho\tilde{f}}\left(\bar{x}\right)=\nabla g\left(\bar{x}\right)+\left(\nabla M_{\rho}h\right)\circ\tilde{g}_{\rho}\left(\bar{x}\right)=0\Leftrightarrow\nabla M_{\rho}h\left(\tilde{g}_{\rho}\left(\bar{x}\right)\right)+\nabla g\left(\tilde{g}_{\rho}\left(\bar{x}\right)-\rho\nabla M_{\rho}h\left(\tilde{g}_{\rho}\left(\bar{x}\right)\right)\right)=0.
\end{equation}
This approach is practical because the literature on first-order numerical
optimization techniques frequently includes proofs of global convergence
for methods that depend on the objective function and its gradient
(for example, see \cite{Fletcher1964,Polak1969,Hestenes1952,Dai1999,Hager2005}).
Alternatively, construction of a potential function for $\mathcal{G}_{\rho\tilde{f}}$ is often not necessary due to the fact that fixed-point methods finding equilibria for a flow mostly establish convergence properties based on Banach fixed point theorem. This theorem guarantees convergence through intrinsic flow properties, obviating the need for a potential function \cite{Burden2001, Atkinson1989, Agarwal2009}. Conventionally, the use of line search based on the objective function and its gradient has been applied in some numerical methods to ensure global convergence. However, with the rapid growth of research in high--dimensional statistical machine learning and large-scale optimization, evaluations of the objective function often proven to be inefficient. Consequently, recent years have seen the exploration of two main alternatives. For instance, two different types of approaches for global convergent nonlinear conjugate gradient methods have been proposed without the conventional objective function-based line search procedure. One type of approach ensures global convergence by utilizing a line search mechanism that depends only on the nonlinear equation that generates the flow \cite{Feng2017, Snyman1985, Snyman2004, Kafka2019}; that is, the gradient function for smooth optimization, or $\mathcal{G}_{\rho\tilde{f}}$ in our case. As an example, under the smoothness assumption, the first-order optimality condition for an exact line search often solves for $\alpha$ with the current value $x^{\left(k\right)}$ and the search direction $d^{\left(k\right)}$ from $\left\langle \mathcal{G}_{\rho\tilde{f}}\left(x^{\left(k\right)}+\alpha\cdot d^{\left(k\right)}\right),d^{\left(k\right)}\right\rangle =0$, an equation dependent only on $\mathcal{G}_{\rho\tilde{f}}$ but not any surrogate objective function. From a practical perspective, this one-dimensional root finding problem can be carried out efficiently using the Brent root finding algorithm \cite{Brent1971}. The other approach suggests achieving global convergence either without the need for line search \cite{Shi2005, Chen2018, Sun2001, Wu2011a, Wang2006, Zhou2009} or by meeting a condition related to the Zoutendijk condition to replace the Wolfe-Powell conditions of sufficient descent (Armijo) and curvature \cite{Neumaier2024}. Additionally, in scenarios where the fulfillment of a sufficient descent (Armijo) condition is imperative, the formulation of a surrogate objective function becomes essential. Considering \eqref{eq:moreau-envelope-grad-decomposition-final-form}, where a surrogate objective is required for the line search phase, it could be formulated as: 
\begin{align}
 & g\left(x\right)+\left(M_{\rho}h\right)\circ\tilde{g}_{\rho}\left(x\right)\nonumber \\
 & =g\left(x\right)+\left(M_{\rho}h\right)\circ\tilde{g}_{\rho}\left(x\right)+\text{constant}\nonumber \\
 & =g\left(x\right)+\left\langle \nabla g\left(x\right),\text{prox}_{\rho h}\left(x-\rho\nabla g\left(x\right)\right)-x\right\rangle +\frac{1}{2\rho}\left\Vert \text{prox}_{\rho h}\left(x-\rho\nabla g\left(x\right)\right)-x\right\Vert ^{2}\label{eq:evaluate-obj}\\
 & \qquad+h\left(\text{prox}_{\rho h}\left(x-\rho\nabla g\left(x\right)\right)\right)+\text{constant}\nonumber 
\end{align}
This formulation, denoted as \eqref{eq:evaluate-obj}, represents
a quadratic approximation of $g$ plus the nonsmooth term $h$, evaluated
at $\text{prox}_{\rho h}\left(x-\rho\nabla g\left(x\right)\right)$. This type of formulation has often been used for the line search step in previous studies \cite{Beck2009, Kanzow2020}. 
The addition of the term $-\left\langle \nabla g\left(x\right),x\right\rangle $
acts as a constant in \eqref{eq:linearizing-nonconvex}, analogous
to fixing the value of $u$ as $\nabla g\left(x\right)$ for linearization.
This constant term, $-\left\langle \nabla g\left(x\right),x\right\rangle $,
doesn't alter the gradient of the Moreau envelope \eqref{eq:practical-moreau-gradient}
or the proximal point \eqref{eq:practical-proximal}, serving to frame
the quadratic approximation of $g\left(\text{prox}_{\rho h}\left(x-\rho\nabla g\left(x\right)\right)\right)$. 

Evaluation of $\text{prox}_{\rho h}$ in \eqref{eq:evaluate-obj}
is tractable and efficient for many functions, such as the $\ell_{1}$
norm commonly encountered in sparse statistical learning can be efficiently
computed via the soft-thresholding function. Given that line search
rules such as the Wolfe-Powell or Armijo-Goldstein conditions require
only the difference in the value of the objective function at two points to decide
on the step size, the constant term in \eqref{eq:evaluate-obj} can
be disregarded. Subsequent global convergence arguments stem from
the fixed-point theory analysis of the numerical methods deployed to find the equilibria of the $\mathcal{G}_{\rho\tilde{f}}$
flow. Another possible surrogate objective function inspired by the
quadratic Lyapunov function for the $\mathcal{G}_{\rho\tilde{f}}$
flow could be $\frac{1}{2}\left\Vert \mathcal{G}_{\rho\tilde{f}}\right\Vert ^{2}$,
attains its minimal value $0$ exactly at the $\mathcal{G}_{\rho\tilde{f}}$
flow's equilibria. This quadratic approach simplifies evaluation,
but it may not offer insights into the potential function's
landscape, potentially limiting the numerical algorithm's acceleration
capabilities if such an algorithm uses the landscape information to ensure the sufficient descent (Armijo) condition. Therefore, formulating the surrogate objective function
preserving the landscape of the original objective function as
outlined in \eqref{eq:evaluate-obj} is preferable. 

Building on the above discussion, we introduce our practical proximal
conjugate gradient framework in Algorithm \ref{alg:proximal-practical-moreau}. 

\begin{algorithm}[H]
\begin{algorithmic}[1]   \State Input: A fixed value of $\rho\in\left(0,\rho^{-1}\right)$ \State Calculate the proximal value $p^{\left( k\right)} \coloneqq \text{prox}_{{\rho}^{-1}h}\left(x^{\left( k\right)}-{\rho}^{-1}\cdot \nabla g\left(x^{\left( k\right)}\right)\right)$   \State Calculate $\mathcal{G}_{\rho\tilde{f}}\left(x^{\left( k\right)}\right)$: $s^{\left( k\right)} \coloneqq {\rho}\left(x^{\left( k\right)}-p^{\left( k\right)}\right)$   \State $d^{\left( k\right)} \coloneqq -s^{\left( k\right)}+\beta^{\left( k\right)}\cdot d^{\left(k-1\right)}$   \State Line search to find $\alpha^{\left( k\right)}$ for the update $x^{\left(k+1\right)}\coloneqq x^{\left( k\right)}+\alpha^{\left(k\right)} d^{\left(k\right)}$, if needed.  \State Update $x^{\left(k+1\right)} \coloneqq x^{\left( k\right)}+\alpha^{\left(k\right)} d^{\left(k\right)}$ \end{algorithmic} 

\caption{Computationally Tractable Proximal Conjugate Gradient Update Scheme \label{alg:proximal-practical-moreau} }
\end{algorithm}

In Algorithm \ref{alg:proximal-practical-moreau}, $\beta^{\left(k\right)}$ functions
as the conjugate parameter. Unlike Algorithm \ref{alg:proximal-general-moreau},
Algorithm \ref{alg:proximal-practical-moreau} facilitates the update
process without the need to compute $\nabla M_{\rho}f\left(x^{\left(k\right)}\right)$.
This adaptation is significantly valuable in practical scenarios,
especially in statistical sparse learning challenges characterized
by a complicated smooth component $g$ alongside a simple nonsmooth
convex component $h$. In such cases, computing $\text{prox}_{\rho h}$
is markedly more tractable and efficient than $\text{prox}_{\rho f}$.
This approach is particularly beneficial for sparse statistical learning
issues, where sparsity is commonly induced by an $\ell_{1}$ penalty
term. 

\subsection{\label{subsec:Proximal-HZ} Proximal Hager-Zhang \cite{Hager2005}
Conjugate Gradient }

The nonlinear conjugate gradient method represents the pinnacle of
first-order techniques for addressing smooth optimization challenges.
Various versions of nonlinear conjugate gradient methods have been
introduced, including the Fletcher-Reeves (FR) method \cite{Fletcher1964},
the modified Polak-Ribiere-Polyak (PRP+) method \cite{Polak1969,Gilbert1992},
the Hestenes-Stiefel (HS) method \cite{Hestenes1952}, the Dai-Yuan
(DY) method \cite{Dai1999}, and the Hager-Zhang (HZ) method \cite{Hager2005}.
These versions have all demonstrated global convergence with nonconvex
globally Lipschitz-smooth objective functions. Among these, the Hager-Zhang
conjugate gradient method is notable for delivering the best numerical
performance on large-scale datasets, as indicated in previous research
\cite{Hager2006}. Building on this, having introduced our practical
proximal conjugate gradient update mechanism in Algorithm \ref{alg:proximal-practical-moreau},
we aim to extend this approach by adapting the smooth Hager-Zhang
nonlinear conjugate gradient method to its proximal version
in Algorithm \ref{alg:proximal-HZ-CG}. 

\begin{algorithm}
\begin{algorithmic}[1]  \State \textbf{Input:} Initial point $x^{\left( 0\right)}$; $g\in\mathcal{C}^{1,1}\left(\mathbb{R}^{p+1},\mathbb{R}\right)$; locally-Lipschitz, convex $h\in\mathcal{C}^{0,0}\left(\mathbb{R}^{p+1},\mathbb{R}\right)$; the smoothing parameter for the Moreau envelope $\rho\in\left(0,\rho^{-1}\right)$; $k \coloneqq 0$  \State \textbf{Output:} $p$  \State $k += 1$  \State Calculate the gradient for $g$: $g^{\left( 0\right)} \coloneqq \nabla g\left(x^{\left( 0\right)}\right)$  \State Calculate the proximal value $p^{\left( 0\right)} \coloneqq \text{prox}_{\rho,h}\left(x^{\left( 0\right)}-\rho\cdot g^{\left( 0\right)}\right)$  \State Calculate the gradient analog: $s^{\left( 0\right)} \coloneqq x^{\left( 0\right)}-p^{\left( 0\right)}$  \State $d^{\left( 0\right)} \coloneqq -s^{\left( 0\right)}$  \State Perform the line search with $d^{\left( 0\right)}$ with step size $\alpha^{\left( 0\right)}$ \State Update $x_1 \coloneqq x^{\left( 0\right)} + \alpha^{\left( 0\right)} d^{\left( 0\right)}$  \While{not converged}  \State $k += 1$ \State Calculate the gradient for $g$: $g^{\left( k\right)} \coloneqq \nabla g\left(x^{\left( k\right)}\right)$  \State Calculate the proximal value $p^{\left( k\right)} \coloneqq \text{prox}_{\rho,h}\left(x^{\left( k\right)}-\rho\cdot g^{\left( k\right)}\right)$  \State Calculate the gradient analog: $s^{\left( k\right)} \coloneqq x^{\left( k\right)}-p^{\left( k\right)}$  \State $d^{\left( k\right)} \coloneqq -s^{\left( k\right)}+\bar{\beta}^{\left( k\right)}\cdot d^{\left(k-1\right)}$  \State Perform the line search with $d^{\left( k\right)}$ with step size $\alpha^{\left( k\right)}$ based on Wolfe-Powell conditions \label{eqn:d-k} \State Update $x^{\left(k+1\right)} \coloneqq x^{\left( k\right)} + \alpha^{\left( k\right)} d^{\left( k\right)}$  \State Check for convergence  \EndWhile  \State \textbf{return} $p^{\left( k\right)}$  \end{algorithmic} 

\caption{Proximal Hager-Zhang \cite{Hager2005} Conjugate Gradient \label{alg:proximal-HZ-CG}}
\end{algorithm}

In Algorithm \ref{alg:proximal-HZ-CG}, Hager-Zhang's conjugate parameter
$\bar{\beta}^{\left(k\right)}$ is defined as \cite{Hager2005}: 
\begin{align*}
y^{\left(k\right)} & \coloneqq s^{\left(k+1\right)}-s^{\left(k\right)}\\
\beta^{\left(k\right)} & \coloneqq\frac{1}{\left\langle d^{\left(k\right)},y^{\left(k\right)}\right\rangle }\cdot\left\langle y^{\left(k\right)}-2\frac{\left\Vert y^{\left(k\right)}\right\Vert ^{2}}{\left\langle d^{\left(k\right)},y^{\left(k\right)}\right\rangle }d^{\left(k\right)},s^{\left(k+1\right)}\right\rangle \\
\eta^{\left(k\right)} & \coloneqq-\frac{1}{\left\Vert d^{\left(k\right)}\right\Vert \min\ \left\{ \eta,\left\Vert s^{\left(k\right)}\right\Vert \right\} }\\
\bar{\beta}^{\left(k\right)} & \coloneqq\max\ \left\{ \beta^{\left(k\right)},\eta^{\left(k\right)}\right\} 
\end{align*}

It was proven that if the line search step in Algorithm \ref{alg:proximal-HZ-CG}
satisfies Wolfe-Powell conditions and the gradient is globally Lipschitz,
Hager-Zhang conjugate gradient achieves global convergence finding a stationary point for
a smooth nonconvex objective function. In a dynamical system view, this corresponds to the global attraction property of the trajectory of the numerical algorithm to find equilibria for globally Lipschitz flows.
Lemma \ref{lem:moreau-Lipschitz-practical} implies that $\mathcal{G}_{\rho\tilde{f}}$,
or $s^{\left(k\right)}$ in Algorithm \ref{alg:proximal-HZ-CG}, are globally Lipschitz.
Thus, by Lemma \ref{lem:equivalent-clarke-stationary-point-pratical},
Algorithm \ref{alg:proximal-HZ-CG} yields the Clarke stationary point
of $f$. Based on the arguments in Section \ref{subsec:PCG-framework}, if the potential
function for $\mathcal{G}_{\rho\tilde{f}}$ is tractable to construct, the Wolfe-Powell line search in Algorithm \ref{alg:proximal-HZ-CG} can be carried out using the potential function of $\mathcal{G}_{\rho\tilde{f}}$ as the surrogate objective function; alternatively, an exact line search can be carried out by finding $\alpha$ that satisfies $\left\langle \mathcal{G}_{\rho\tilde{f}}\left(x^{\left(k\right)}+\alpha\cdot d^{\left(k\right)}\right),d^{\left(k\right)}\right\rangle =0$ --- such an exact line search can usually be carried out efficiently using Brent's method to find a root of a one-dimensional equation in $\mathbb{R}_{>0}$ \cite{Brent1971}. Furthermore, the descent property of $d^{\left(k\right)}$ was shown by \cite{Hager2005} independent of the line searches, which guarantees that $\left\langle \mathcal{G}_{\rho\tilde{f}}\left(x^{\left(k\right)}+\alpha\cdot d^{\left(k\right)}\right),d^{\left(k\right)}\right\rangle =0$ has a positive root. Moreover, another line search to ensure global convergence can be carried out by backtracking to find $\alpha^{\left(k\right)}$ satisfying 
\begin{equation}
    -\left\langle \mathcal{G}_{\rho\tilde{f}}\left(x^{\left(k\right)}+c_{1}\cdot\alpha^{\left(k\right)}d^{\left(k\right)}\right),d^{\left(k\right)}\right\rangle \geq c_{1}c_{2}\cdot\alpha^{\left(k\right)}\left\Vert d^{\left(k\right)}\right\Vert ^{2} \label{eq:feng-backtracking-line-search},
\end{equation}
where $c_{1},c_{2}\in\mathbb{R}_{>0}$ are constant to be chosen. When $\mathcal{G}_{\rho\tilde{f}}$ is pseudo-monotone in the sense of Karamardian \cite{Karamardian1976}, since the global Lipschitz property was established for $\mathcal{G}_{\rho\tilde{f}}$ in Lemma \ref{lem:moreau-Lipschitz-practical}, global convergence was proven for this backtracking line search method \cite{Feng2017}. We conclude this section with the observation that certain conjugate gradient methods obviate the need for line search procedures by determining the step size directly from $s^{(k)}$ and $d^{(k)}$, as exemplified in \cite{Chen2018}.

\section{\label{sec:optimizing-penalized-mle} Optimizing Algorithm and Prediction for Penalized $q$Gaussian Likelihood Problems }

\subsection{Problem Formulation }

Using the $q$Gaussian distribution to model the data will undoubtedly
enhance the robustness towards the underlying distributional assumption
and outliers. However, unlike the Gaussian distribution, two independent
$q$Gaussian random vectors are not jointly $q$Gaussian. Thus, we
take the following approach to model the data. Let $\mathbf{X}_{\text{train}}\in\mathbb{R}^{n_{\text{train}}\times\left(p+1\right)}$, $\ y_{\text{train}}\in\mathbb{R}^{n_{\text{train}}}$
denote the training design matrix and outcome, $\mathbf{X}_{\text{val}}\in\mathbb{R}^{n_{\text{val}}\times\left(p+1\right)},\ y_{\text{val}}\in\mathbb{R}^{n_{\text{val}}}$
denote the validation design matrix and outcome, and $\mathbf{X}_{\text{test}}\in\mathbb{R}^{n_{\text{test}}\times\left(p+1\right)},\ y_{\text{test}}\in\mathbb{R}^{n_{\text{test}}}$
denote the testing design matrix and outcome. Let 
\begin{equation}
\mathbf{X}\coloneqq\left[\mathbf{X}_{\text{train}}^{T},\mathbf{X}_{\text{val}}^{T},\mathbf{X}_{\text{test}}^{T}\right]^{T}\in\mathbb{R}^{n\times\left(p+1\right)}
\end{equation}
denote the design matrix for the entire dataset, and let 
\begin{equation}
y\coloneqq\left[y_{\text{train}}^{T},y_{\text{val}}^{T},y_{\text{test}}^{T}\right]^{T}\in\mathbb{R}^{n}
\end{equation}
denote the outcome for the entire dataset. Instead of assuming the $q$Gaussian
distribution for the training, validation and testing set separately,
we assume that
\begin{equation}
y\sim q\text{Gaussian}\left(q,\mathbf{X}\theta,\Sigma\right),
\end{equation}
where $\theta\in\mathbb{R}^{p+1}$ denotes the coefficients for regression,
and $\Sigma$ denotes the characteristic/scale matrix for the entire
data. Clearly, 
\begin{align*}
\mathbf{X}_{\text{train}} & =\left[I_{n_{\text{train}}\times n_{\text{train}}},0_{n_{\text{train}}\times n_{\text{val}}},0_{n_{\text{train}}\times n_{\text{test}}}\right]\mathbf{X}\\
y_{\text{train}} & =\left[I_{n_{\text{train}}\times n_{\text{train}}},0_{n_{\text{train}}\times n_{\text{val}}},0_{n_{\text{train}}\times n_{\text{test}}}\right]y
\end{align*}
implies that 
\begin{equation}
y_{\text{train}}\sim q\text{Gaussian}\left(q_{\text{train}},\mathbf{X}_{\text{train}}\theta,\Sigma_{\text{train}}\right)\label{eq:$q$Gaussian-distributed}
\end{equation}
where by the linear mapping closeness property \ref{enu:linear-mapping-invariance},
\begin{equation}
\Sigma_{\text{train}}=\left[I_{n_{\text{train}}\times n_{\text{train}}},0_{n_{\text{train}}\times n_{\text{val}}},0_{n_{\text{train}}\times n_{\text{test}}}\right]\Sigma\left[I_{n_{\text{train}}\times n_{\text{train}}},0_{n_{\text{train}}\times n_{\text{val}}},0_{n_{\text{train}}\times n_{\text{test}}}\right]^{T}
\end{equation}
is the $n_{\text{train}}\times n_{\text{train}}$ block diagonal matrix
of $\Sigma$ corresponding to the training data. By \eqref{enu:linear-mapping-invariance},
\begin{equation}
\frac{2}{1-q_{\text{train}}^{-1}}-n_{\text{train}}=\frac{2}{1-q^{-1}}-n,
\end{equation}
which implies that 
\begin{equation}
\frac{1}{q_{\text{train}}-1}-n_{\text{train}}=\frac{1}{q-1}-n.\label{eq:recover-q}
\end{equation}
\eqref{eq:recover-q} allows us to recover $q$ from the training
procedure. Above formulas for the training data and parameters can
trivially be applied to the validation and the testing data and parameters;
thus, validation and test can carried out easily from the model build
from the training data. 

For $q-$correlated data, often times, the $q-$correlation structure
is inferred or given prior to the model fitting; thus, we assume that
the $q-$correlation structure is given as $\Psi$ and we estimate the volatility / dispersion / scale parameter $\sigma^{2}>0$ such that
\begin{equation}
\Sigma=\sigma^{2}\Psi.
\end{equation}
Trivially, $\Psi_{\text{train}}$ is the block diagonal matrix of
$\Psi$ corresponding to the training data and $\Sigma_{\text{train}}=\sigma^{2}\Psi_{\text{train}}$. 

We are now ready to formulate our likelihood loss function. To utilize
$q$Gaussian distribution to model the $q-$correlated observations, we estimate the value of $q$ such that $q$ is allowed to
vary, and the model will thus be more robust towards a wide class of distributions.
Therefore, we choose to build the model using \eqref{eq:$q$Gaussian-heavy-tail-density},
since the dispersion matrix $\Lambda$ \eqref{eq:$q$Gaussian-simple-form}
depends on $q$. We formulate our maximization of our log-likelihood function
as the following from \eqref{eq:$q$Gaussian-distributed} and \eqref{eq:$q$Gaussian-heavy-tail-density}:
\begin{dgroup*}
  \begin{dmath*}
\underset{q_{\text{train}}\in(1,1+\frac{2}{n_{\text{train}}}),\theta\in\mathbb{R}^{p+1},\sigma^{2}\in\mathbb{R}_{>0}}{\arg\max\ }  \log(\frac{1}{\left|\sigma^{2}\Psi_{\text{train}}\right|^{\nicefrac{1}{2}}}\cdot\frac{\Gamma\left(\frac{1}{q_{\text{train}}-1}\right)}{\Gamma\left(\frac{1}{q_{\text{train}}-1}-\frac{n_{\text{train}}}{2}\right)}\cdot\left(\frac{2}{q_{\text{train}}-1}-n_{\text{train}}\right)^{-\frac{n_{\text{train}}}{2}}
  \cdot\left(1+\left(\frac{2}{q_{\text{train}}-1}-n_{\text{train}}\right)^{-1}\cdot\left\langle y_{\text{train}}-\mathbf{X}_{\text{train}}\theta,\left(\sigma^{2}\Psi_{\text{train}}\right)^{-1}\left(y_{\text{train}}-\mathbf{X}_{\text{train}}\theta\right)\right\rangle \right)^{\frac{1}{1-q_{\text{train}}}}).
\end{dmath*}
\end{dgroup*}
To address the high--dimensional data concerns, Oracle penalties are
incorporated to carry out variable selection. To penalize the log-likelihood
loss function to achieve variable selection, we formulate the following
problem: 
\begin{dgroup*}
  \begin{dmath*}
\underset{q_{\text{train}}\in(1,1+\frac{2}{n_{\text{train}}}),\theta\in\mathbb{R}^{p+1},\sigma^{2}\in\mathbb{R}_{>0}}{\arg\min\ }-\log(\frac{1}{\left|\sigma^{2}\Psi_{\text{train}}\right|^{\nicefrac{1}{2}}}\cdot\frac{\Gamma\left(\frac{1}{q_{\text{train}}-1}\right)}{\Gamma\left(\frac{1}{q_{\text{train}}-1}-\frac{n_{\text{train}}}{2}\right)}\cdot\left(\frac{2}{q_{\text{train}}-1}-n_{\text{train}}\right)^{-\frac{n_{\text{train}}}{2}}
 \qquad\cdot\left(1+\left(\frac{2}{q_{\text{train}}-1}-n_{\text{train}}\right)^{-1}\cdot\sigma^{-2}\cdot\left(\left\langle y_{\text{train}}-\mathbf{X}_{\text{train}}\theta,\Psi_{\text{train}}^{-1}\left(y_{\text{train}}-\mathbf{X}_{\text{train}}\theta\right)\right\rangle +2n_{\text{train}}\sum_{j=2}^{p+1}w\left(\theta_{j}\right)\right)\right)^{\frac{1}{1-q_{\text{train}}}})\nonumber 
 \end{dmath*}
\begin{dmath}
\Leftrightarrow  \underset{q_{\text{train}}\in(1,1+\frac{2}{n_{\text{train}}}),\theta\in\mathbb{R}^{p+1},\sigma^{2}\in\mathbb{R}_{>0}}{\arg\min\ }\frac{n}{2}\log\sigma^{2}-\log\Gamma\left(\frac{1}{q_{\text{train}}-1}\right)+\log\Gamma\left(\frac{1}{q_{\text{train}}-1}-\frac{n_{\text{train}}}{2}\right)+\frac{n_{\text{train}}}{2}\log\left(\frac{2}{q_{\text{train}}-1}-n_{\text{train}}\right)
 \quad+\frac{1}{q_{\text{train}}-1}\log\left(1+\left(\frac{2}{q_{\text{train}}-1}-n_{\text{train}}\right)^{-1}\cdot\sigma^{-2}\cdot\left(\left\langle y_{\text{train}}-\mathbf{X}_{\text{train}}\theta,\Psi_{\text{train}}^{-1}\left(y_{\text{train}}-\mathbf{X}_{\text{train}}\theta\right)\right\rangle +2n_{\text{train}}\sum_{j=2}^{p+1}w\left(\theta_{j}\right)\right)\right)\label{eq:opt-problem}
\end{dmath}
\end{dgroup*}
In the above formulated problem, $w$ is the Oracle penalty function,
and we are not to penalize the intercept term. The $2n_{\text{train}}$
multiplier is to ensure that the penalization effect is consistent with
the number of training observations. We choose to put the penalty
term together with the quadratic term without the variance scale parameter
$\sigma^{2}$ for two reasons: first, the optimization problem is
more tractable under such problem formulation; second, we do not wish
to let the value of $\sigma^{2}$ perturb the degree of penalization.
Comparing to penalized the log-likelihood directly, we choose to penalize
the quadratic component directly as it is more tractable. It was shown
that doing so will preserve Oracle properties \cite{Nikolova2000}
of penalized estimators. 

For the optimization procedure, we will proceed in a blockwise manner;
i.e., we will optimize $q_{\text{train}},\theta,\sigma^{2}$ separately
in each iteration. More details will be given in the following
subsections. 

\subsection{Minimizing with respect to $q_{\text{train}}$ and $\sigma^{2}$ }

With all the other parameters fixed, the sub-problem to minimize with
respect to $\sigma^{2}$ is 
\begin{align}
\underset{\sigma^{2}\in\mathbb{R}_{>0}}{\arg\min\ } & \frac{n}{2}\log\sigma^{2}+\frac{1}{q_{\text{train}}-1}\log(1+\left(\frac{2}{q_{\text{train}}-1}-n_{\text{train}}\right)^{-1}\cdot\sigma^{-2}\label{eq:subproblem-sigma2}\\
 & \quad\cdot\left(\left\langle y_{\text{train}}-\mathbf{X}_{\text{train}}\theta,\Psi_{\text{train}}^{-1}\left(y_{\text{train}}-\mathbf{X}_{\text{train}}\theta\right)\right\rangle +2n_{\text{train}}\sum_{j=2}^{p+1}w\left(\theta_{j}\right)\right))\nonumber 
\end{align}
which has a smooth objective function with respect to $\sigma^{2}$.
The first-order optimality condition 
\begin{align*}
\frac{n}{2}= & \frac{1}{q_{\text{train}}-1}\\
 & \cdot\frac{\left(\frac{2}{q_{\text{train}}-1}-n_{\text{train}}\right)^{-1}\cdot\left(\left\langle y_{\text{train}}-\mathbf{X}_{\text{train}}\theta,\Psi_{\text{train}}^{-1}\left(y_{\text{train}}-\mathbf{X}_{\text{train}}\theta\right)\right\rangle +2n_{\text{train}}\sum_{j=2}^{p+1}w\left(\theta_{j}\right)\right)}{\sigma^{2}+\left(\frac{2}{q_{\text{train}}-1}-n_{\text{train}}\right)^{-1}\cdot\left(\left\langle y_{\text{train}}-\mathbf{X}_{\text{train}}\theta,\Psi_{\text{train}}^{-1}\left(y_{\text{train}}-\mathbf{X}_{\text{train}}\theta\right)\right\rangle +2n_{\text{train}}\sum_{j=2}^{p+1}w\left(\theta_{j}\right)\right)}
\end{align*}
implies that the optimal value for the subproblem \eqref{eq:subproblem-sigma2} takes
minimizer 
\begin{align}
\overline{\sigma^{2}} & =\left(\frac{1}{q_{\text{train}}-1}/\frac{n}{2}-1\right)\cdot\left(\frac{2}{q_{\text{train}}-1}-n_{\text{train}}\right)^{-1}\label{eq:sigma2-closed-form-minimizer}\\
 & \quad\cdot\left(\left\langle y_{\text{train}}-\mathbf{X}_{\text{train}}\theta,\Psi_{\text{train}}^{-1}\left(y_{\text{train}}-\mathbf{X}_{\text{train}}\theta\right)\right\rangle +2n_{\text{train}}\sum_{j=2}^{p+1}w\left(\theta_{j}\right)\right)>0,\nonumber 
\end{align}
which is feasible. The feasible set for $q_{\text{train}}$ is $\left(1,1+\frac{2}{n_{\text{train}}}\right)$,
in this view, when $n_{\text{train}}$ is large, the numerical stability
will be an issue if minimization is carried out with respect
to $q_{\text{train}}$ directly. Thus, we choose to minimize with
respect to $\frac{1}{q_{\text{train}}-1}\in\left(\frac{n_{\text{train}}}{2},\infty\right)$. 

First of all, we are to prove that such minimization is feasible. 
\begin{lem}
The objective function \eqref{eq:opt-problem} has a local minimizer
in $\left(\frac{n_{\text{train}}}{2},\infty\right)$ with respect
to $\frac{1}{q_{\text{train}}-1}$. 
\end{lem}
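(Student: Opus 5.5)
Throughout, write $N\coloneqq n_{\text{train}}$ and $t\coloneqq\frac{1}{q_{\text{train}}-1}\in\left(\frac{N}{2},\infty\right)$. Fix $\theta$ and $\sigma^{2}$, and set
\[
c\coloneqq\sigma^{-2}\Bigl(\bigl\langle y_{\text{train}}-\mathbf{X}_{\text{train}}\theta,\Psi_{\text{train}}^{-1}(y_{\text{train}}-\mathbf{X}_{\text{train}}\theta)\bigr\rangle+2N\textstyle\sum_{j=2}^{p+1}w(\theta_{j})\Bigr)>0 .
\]
The plan is to study the part of \eqref{eq:opt-problem} that depends on $t$,
\[
F(t)=-\log\Gamma(t)+\log\Gamma\!\left(t-\tfrac{N}{2}\right)+\tfrac{N}{2}\log(2t-N)+t\log\!\left(1+\tfrac{c}{2t-N}\right).
\]
This is a smooth function on $\left(\frac{N}{2},\infty\right)$. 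The approach has two parts. First, show that $F\to+\infty$ at the left endpoint. Second, show that $F$ has a finite limit $F_{\infty}$ at $+\infty$ and dips strictly below $F_{\infty}$ somewhere. Given both, continuity on a compact subinterval $[t_{0},T]$ gives an interior minimizer. That point is even a global minimizer of $F$ on the open interval, and hence a local one.

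For the left endpoint, let $\varepsilon\coloneqq t-\frac{N}{2}\searrow0$. The four terms behave as follows:
\begin{itemize}
\item $\log\Gamma(\varepsilon)=-\log\varepsilon+O(1)$;
\item $-\log\Gamma(t)\to-\log\Gamma(N/2)$, which is finite;
\item $\frac{N}{2}\log(2\varepsilon)\to-\infty$;
\item $t\log\bigl(1+c/(2\varepsilon)\bigr)=\frac{N}{2}\log c-\frac{N}{2}\log(2\varepsilon)+o(1)$.
\end{itemize}
The two $\frac{N}{2}\log(2\varepsilon)$ contributions cancel exactly, leaving $F(t)=-\log\varepsilon+O(1)\to+\infty$.

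For $t\to\infty$, I would use the standard Stirling-type expansion
\[
\log\Gamma(t-a)-\log\Gamma(t)=-a\log t+\frac{a(a+1)}{2t}+O(t^{-2}),\qquad a=\tfrac{N}{2}.
\]
Next, expand $\frac{N}{2}\log(2t-N)=\frac{N}{2}\log(2t)-\frac{N^{2}}{4t}+O(t^{-2})$. Finally, $t\log\bigl(1+\frac{c}{2t-N}\bigr)=\frac{c}{2}+\frac{cN}{4t}-\frac{c^{2}}{8t}+O(t^{-2})$. Adding these gives
\[
F(t)=F_{\infty}-\frac{(c-N)^{2}-2N}{8t}+O(t^{-2}),\qquad F_{\infty}=\tfrac{N}{2}\log2+\tfrac{c}{2}.
\]
So whenever $(c-N)^{2}>2N$, $F$ approaches $F_{\infty}$ from below. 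There is then some $t_{1}$ with $F(t_{1})<F_{\infty}$. Choose $t_{0}$ close to $N/2$ and $T$ large so that $F>F(t_{1})$ on $\left(\frac{N}{2},t_{0}\right]\cup[T,\infty)$. The minimum of $F$ over $[t_{0},T]$ then lies in the interior and gives the desired minimizer.

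The main obstacle is the remaining regime $(c-N)^{2}\le2N$. There the leading correction no longer forces a dip below $F_{\infty}$, so existence of an interior minimizer is not automatic from the asymptotics. My first attempt would be to use the blockwise structure: $c$ is not arbitrary, since $\sigma^{2}$ is updated by the closed form \eqref{eq:sigma2-closed-form-minimizer}. I would substitute that profiled $\sigma^{2}$, viewed as a function of $t$, into the objective and redo the two endpoint analyses for the profiled function. The hope is that the $\sigma^{2}$-profiling produces a $\log$-type growth at infinity that replaces the borderline $1/t$ term. Failing that, I would evaluate $F$ at an explicit point such as $t=N$ and try to verify $F(N)<F_{\infty}$ directly with Gamma-function inequalities (for example Gautschi's inequality). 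Either route is the step most likely to need care or an extra assumption on the data.
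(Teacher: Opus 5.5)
\textbf{There is a genuine gap, and it cannot be closed: the statement is false in the regime you flag.}

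Your two endpoint computations and the $1/t$ expansion are correct, and they do prove the claim, even with a global minimizer, whenever $(c-N)^{2}>2N$.

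The paper's proof does not cover the remaining case. It asserts that \eqref{eq:opt-problem} tends to $+\infty$ as $\frac{1}{q_{\text{train}}-1}\to\infty$. It claims that $\frac{\Gamma(t)}{\Gamma(t-N/2)}(2t-N)^{-N/2}\to1$ and that the last term diverges like $t/\log t$. In fact that factor tends to $2^{-N/2}$ and the last term tends to $c/2$, exactly as your expansion gives, so the paper's coercivity argument is invalid at the right endpoint.

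For a counterexample, let $R$ be the bracket in your definition of $c$, so $c=R/\sigma^{2}$, and take $\sigma^{2}=R/N$, i.e.\ $c=N$. Then $1+\frac{N}{2t-N}=\frac{2t}{2t-N}$ and
\[
F(t)=\log\Gamma\!\left(t-\tfrac{N}{2}\right)-\log\Gamma(t)+t\log(2t)-\left(t-\tfrac{N}{2}\right)\log(2t-N),\qquad F'(t)=\phi(t)-\phi\!\left(t-\tfrac{N}{2}\right),
\]
where $\phi(x)\coloneqq\log x-\psi(x)$ and $\psi=\Gamma'/\Gamma$. Since $\psi'(x)=\sum_{k\ge0}(x+k)^{-2}>\int_{0}^{\infty}(x+u)^{-2}\,du=1/x$, the function $\phi$ is strictly decreasing. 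Hence $F'<0$ on $\left(\frac{N}{2},\infty\right)$: $F$ decreases strictly from $+\infty$ to $F_{\infty}$ and has no local minimizer at all.

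Consequently both of your fallbacks fail. You cannot obtain $F(N)<F_{\infty}$, since in this case $F>F_{\infty}$ everywhere. Profiling $\sigma^{2}$ lands you exactly on the counterexample. With the coefficient $\frac{N}{2}\log\sigma^{2}$ that the factor $|\sigma^{2}\Psi_{\text{train}}|^{-1/2}$ produces, \eqref{eq:sigma2-closed-form-minimizer} reduces to $\overline{\sigma^{2}}=R/N$, independent of $t$. So after every $\sigma^{2}$-update the $t$-subproblem has $c=N$, and its infimum is approached only as $t\to\infty$, i.e.\ $q_{\text{train}}\to1$.

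What you can prove is a conditional statement: a global minimizer exists whenever $\inf F<F_{\infty}$, and $(c-N)^{2}>2N$ is sufficient for that. Beyond this, the lemma needs an additional hypothesis on $(\theta,\sigma^{2})$, or it must admit the Gaussian boundary case $q_{\text{train}}=1$.
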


\begin{proof}
Since the objective function \eqref{eq:opt-problem} is continuous
and smooth with respect to $\frac{1}{q_{\text{train}}-1}$, we only
need to analyze the derivative when $\frac{1}{q_{\text{train}}-1}\searrow0$
and $\frac{1}{q_{\text{train}}-1}\rightarrow\infty$.

$\frac{1}{q_{\text{train}}-1}\rightarrow\infty$:

Stirling's formula states that 
\begin{equation}
\lim_{x\rightarrow\infty}\frac{\Gamma\left(x\right)}{\sqrt{\frac{2\pi}{x}}\left(\frac{x}{e}\right)^{x}\left(1+O\left(x^{-1}\right)\right)}=1.
\end{equation}
Thus, 
\begin{equation}
\lim_{\frac{1}{q_{\text{train}}-1}\rightarrow\infty}\frac{\Gamma\left(\frac{1}{q_{\text{train}}-1}\right)}{\Gamma\left(\frac{1}{q_{\text{train}}-1}-\frac{n_{\text{train}}}{2}\right)}\cdot\left(\frac{2}{q_{\text{train}}-1}-n_{\text{train}}\right)^{-\frac{n_{\text{train}}}{2}}=1
\end{equation}
then 
\begin{equation}
\lim_{\frac{1}{q_{\text{train}}-1}\rightarrow\infty}-\log\left(\frac{\Gamma\left(\frac{1}{q_{\text{train}}-1}\right)}{\Gamma\left(\frac{1}{q_{\text{train}}-1}-\frac{n_{\text{train}}}{2}\right)}\cdot\left(\frac{2}{q_{\text{train}}-1}-n_{\text{train}}\right)^{-\frac{n_{\text{train}}}{2}}\right)=0.
\end{equation}
We also have 
\begin{align*}
 & \frac{1}{q_{\text{train}}-1}\log(1+\left(\frac{2}{q_{\text{train}}-1}-n_{\text{train}}\right)^{-1}\cdot\sigma^{-2}\\
 & \quad\cdot\left(\left\langle y_{\text{train}}-\mathbf{X}_{\text{train}}\theta,\Psi_{\text{train}}^{-1}\left(y_{\text{train}}-\mathbf{X}_{\text{train}}\theta\right)\right\rangle +2n_{\text{train}}\sum_{j=2}^{p+1}w\left(\theta_{j}\right)\right))\\
 & =O\left(\left(\frac{1}{q_{\text{train}}-1}\right)/\log\left(\frac{1}{q_{\text{train}}-1}\right)\right),
\end{align*}
which implies that this term will goes to infinity as $\frac{1}{q_{\text{train}}-1}\rightarrow\infty$.
Thus, the objective function \eqref{eq:opt-problem} goes to infinity
as $\frac{1}{q_{\text{train}}-1}\rightarrow\infty$. 

$\frac{1}{q_{\text{train}}-1}\searrow\frac{n_{\text{train}}}{2}$: 

Since $\Gamma\left(\frac{1}{q_{\text{train}}-1}-\frac{n_{\text{train}}}{2}\right)\rightarrow\infty$
as $\frac{1}{q_{\text{train}}-1}\searrow\frac{n_{\text{train}}}{2}$. 

The penalized log-likelihood involving $\frac{1}{q_{\text{train}}-1}$
can be simplified as 
\begin{align*}
 & -\log\frac{\Gamma\left(\frac{1}{q_{\text{train}}-1}\right)}{\Gamma\left(\frac{1}{q_{\text{train}}-1}-\frac{n_{\text{train}}}{2}\right)}\cdot(\left(\frac{2}{q_{\text{train}}-1}-n_{\text{train}}\right)\\
 & \qquad+\left\langle y_{\text{train}}-\mathbf{X}_{\text{train}}\theta,\left(\sigma^{2}\Psi_{\text{train}}\right)^{-1}\left(y_{\text{train}}-\mathbf{X}_{\text{train}}\theta\right)\right\rangle +2n_{\text{train}}\sum_{j=2}^{p+1}w\left(\theta_{j}\right))^{\frac{1}{1-q_{\text{train}}}}\rightarrow\infty
\end{align*}
as $\frac{1}{q_{\text{train}}-1}\searrow\frac{n_{\text{train}}}{2}$.
Thus, the subproblem to minimize with respect to $\frac{1}{q_{\text{train}}-1}$
is coercive on $\left(\frac{n_{\text{train}}}{2},\infty\right)$.
Coercivity implies that any minimizing sequence $\left\{ \left(\frac{1}{q_{\text{train}}-1}\right)_{j}\right\} $
must be contained within a bounded subset of $\left(\frac{n_{\text{train}}}{2},\infty\right)$.
Thus, Bolzano--Weierstrass theorem implies the existence of a convergent
subsequence. Let $\left\{ \left(\frac{1}{q_{\text{train}}-1}\right)_{j_{k}}\right\} $
be one such subsequence, and let $\overline{\left(\frac{1}{q_{\text{train}}-1}\right)}$
be its limit. Since the subproblem has a continuous objective function
with respect to $\frac{1}{q_{\text{train}}-1}$, the objective function
is lower--semicontinuous and the value of the objective function at
$\overline{\left(\frac{1}{q_{\text{train}}-1}\right)}$ is less than or equal to
the value of the objective function at $\left(\frac{1}{q_{\text{train}}-1}\right)_{j_{k}}$
for all $k=1,2,\dots,\infty$. Thus, since $\left\{ \left(\frac{1}{q_{\text{train}}-1}\right)_{j}\right\} $
is a minimizing sequence, the value of the objective function at $\overline{\left(\frac{1}{q_{\text{train}}-1}\right)}$
is less than or equal to the infimum of the objective function on
$\left(\frac{n_{\text{train}}}{2},\infty\right)$. Hence, since the
entire minimizing sequence is contained in $\left(\frac{n_{\text{train}}}{2},\infty\right)$,
$\overline{\left(\frac{1}{q_{\text{train}}-1}\right)}\in\left(\frac{n_{\text{train}}}{2},\infty\right)$
solves the subproblem of minimizing with respect to $\frac{1}{q_{\text{train}}-1}$. 

Unlike $\sigma^{2}$, the minimizer for $\frac{1}{q_{\text{train}}-1}$
is not in closed form, and the evaluation of the derivative with respect
to $\frac{1}{q_{\text{train}}-1}$ can not be carried out efficiently.
Thus, we apply Brent's line-search method to optimize the $\frac{1}{q_{\text{train}}-1}$
subproblem \cite{Brent1971}. 
\end{proof}

\subsection{Minimizing with respect to $\theta$ \label{subsec:optimization-motivation}}

Minimizing with respect to $\theta$ involves a nonconvex smooth function
and a convex nonsmooth function, which is termed a composite problem. In 
Section \ref{subsec:Proximal-HZ}, we developed a proximal conjugate
gradient algorithm for such composite optimization. 

In this part, we will establish an important remark regarding the
Oracle penalty. The subproblem we are to minimize with respect to
$\theta$ is 
\begin{align}
 & \underset{\theta\in\mathbb{R}^{p+1}}{\arg\min\ }\left\langle y_{\text{train}}-\mathbf{X}_{\text{train}}\theta,\Psi_{\text{train}}^{-1}\left(y_{\text{train}}-\mathbf{X}_{\text{train}}\theta\right)\right\rangle +2n_{\text{train}}\sum_{j=2}^{p+1}w\left(\theta_{j}\right)\nonumber \\
\Leftrightarrow & \underset{\theta\in\mathbb{R}^{p+1}}{\arg\min\ }\frac{1}{2n_{\text{train}}}\left\langle y_{\text{train}}-\mathbf{X}_{\text{train}}\theta,\Psi_{\text{train}}^{-1}\left(y_{\text{train}}-\mathbf{X}_{\text{train}}\theta\right)\right\rangle +\sum_{j=2}^{p+1}w\left(\theta_{j}\right)\nonumber \\
\Leftrightarrow & \underset{\theta\in\mathbb{R}^{p+1}}{\arg\min\ }\frac{1}{2n_{\text{train}}}\left\langle \theta,\mathbf{X}_{\text{train}}^{T}\Psi_{\text{train}}^{-1}\mathbf{X}_{\text{train}}\theta\right\rangle -2\left\langle y_{\text{train}},\Psi_{\text{train}}^{-1}\mathbf{X}_{\text{train}}\theta\right\rangle +\sum_{j=2}^{p+1}w\left(\theta_{j}\right)\label{eq:central-trend-subproblem}
\end{align}
$w$ can be chosen as oracle penalties such as SCAD/MCP penalties.
And it has been shown that both SCAD / MCP penalties admit a difference-of-convex
decomposition to a first-order smooth concave term plus $\lambda$
times $\ell_{1}$ penalty. The quadratic loss function is clearly
convex and smooth. This justifies our assumption for the objective
function. To carry out the proximal Hager-Zhang conjugate gradient
method proposed in Section \ref{subsec:Proximal-HZ}, we need to calculate
$L_{\nabla g}$, the $L-$smoothness constant for the smooth component.
Previous work suggests $L_{\nabla g}=\max\ \left\{ \text{max eigenvalue of }\frac{1}{n_{\text{train}}}\mathbf{X}_{\text{train}}^{T}\Psi_{\text{train}}^{-1}\mathbf{X}_{\text{train}},c_{\text{penalty}}\right\} $,
where $c_{\text{penalty}}$ is the $L-$smoothness constant for the
smooth component of the penalty, which will be $\frac{1}{a-1}$ for
SCAD and $\frac{1}{\gamma}$ for MCP \cite{Yang2024}. 
\begin{rem}
For high dimensional data, often times, the number of covariates exceeds
the number of observations; i.e, $\text{null}\left(\mathbf{X}_{\text{train}}\right)\neq\emptyset$.
Both SCAD/MCP penalties take constant values in $B_{\infty}\left(0,c\right)$\footnote{$B_{\infty}\left(0,c\right)$ denotes the open ball in uniform norm
\emph{in the corresponding space}, centered at the origin with radius
$c$. }; where $c=a\lambda$ for SCAD and $c=\gamma\lambda$ for MCP. Given
any stationary point $\bar{\theta}$ in the nonempty solution set
defined by $\mathbf{X}_{\text{train}}^{T}\Psi_{\text{train}}^{-1}\mathbf{X}_{\text{train}}-\mathbf{X}_{\text{train}}^{T}y_{\text{train}}=0$.
For the set $\bar{\theta}+\text{null}\left(\mathbf{X}_{\text{train}}\right)\setminus B_{\infty}\left(0,c\right)$,
each point in the relative interior (which is nonempty) of this set
is a Clarke stationary point, which implies that any algorithm with
a starting point in this set will converge in $0$ steps. This might
pose an issue for signal recovery, since $\text{null}\left(\mathbf{X}_{\text{train}}\right)$
is a vector subspace and some points can be very far from the origin. 
\end{rem}

\begin{rem}
In view of the subproblem with respect to $\theta$, it is
trivial that the minimizer for \eqref{eq:central-trend-subproblem}
does not depend on the other parameters, which are $q$ and $\sigma^{2}$.
Since the $q$Gaussian distribution is a generalization for all bell curve distributions, the estimation of the central trend
using the maximum likelihood principle for bell curve distributions is
equivalent to minimize a quadratic function, which has a breakdown point
of $0$. 

Taking into account the optimization subproblem with respect to $\theta$, it is evident that
the solution to \eqref{eq:central-trend-subproblem} remains unaffected
by the other parameters, namely $q$ and $\sigma^{2}$. Given that the $q$Gaussian distribution
extends the framework of bell curve distributions, the problem \eqref{eq:central-trend-subproblem} implies that estimating the central trend through
the maximum likelihood principle for all bell curve distributions is equivalent
to minimizing a quadratic function of the central trend, therefore characterized by a breakdown point
of $0$. 
\end{rem}

\subsection{Prediction for $y_{\text{test}}$ }

To show how prediction can be made, we will show the methods to
predict $y_{\text{test}}$ using the trained model in this subsection.
The same method applies for validation when predictions on $y_{\text{val}}$
are needed or to predict any new data based on the trained model. Since
the data are mutually $q$Gaussian, \eqref{eq:recover-q} implies
that 
\begin{equation}
\frac{1}{q_{\text{train}}-1}-n_{\text{train}}=\frac{1}{q_{\text{val}}-1}-n_{\text{val}}=\frac{1}{q_{\text{test}}-1}-n_{\text{test}}=\frac{1}{q-1}-n,
\end{equation}
which will be used to recover the value of the shape parameter $q_{\text{val}},q_{\text{test}}$.
Note that when $n_{\text{new}}$ data points are introduced, the total
number of observations $n$ changes from $n_{\text{train}}+n_{\text{val}}+n_{\text{test}}$
to $n_{\text{train}}+n_{\text{val}}+n_{\text{test}}+n_{\text{new}}$,
thus, the value of $q_{\cdot}$ will change for the entire dataset.
However, $q_{\text{train}}$ stays the same; thus, we suggest inferring the shape parameter for each data set based on $q_{\text{train}}$
directly using the equation above. With $q_{\cdot}$ calculated, it
is straightforward to estimate the $q-$variance-covariance matrix
$\mathbb{E}_{q}\left[\left(y_{\cdot}-\mathbf{X}\theta\right)\left(y_{\cdot}-\mathbf{X}\theta\right)^{T}\right]$
based on \eqref{eq:q-variance-covariance}; or, \emph{if existing,}
the variance-covariance matrix $\mathbb{E}\left[\left(y_{\cdot}-\mathbf{X}\theta\right)\left(y_{\cdot}-\mathbf{X}\theta\right)^{T}\right]$
based on \eqref{eq:variance-covariance}. 



\section{\label{sec:Conclusion-and-Discussion} Conclusion and Discussion }

This paper explores the field of statistical sparse learning, focusing
on modeling correlated data through the lens of maximizing Tsallis
entropy. It addresses the limitations inherent in the conventional
Gaussian distribution, notably its lack of robustness towards outliers
and underlying shape assumptions, by advocating for the $q$Gaussian
distribution. This distribution, derived from Tsallis entropy maximization,
represents a novel approach to handling correlated data and heterogeneity
--- elements frequently encountered in biostatistical contexts involving
genetic and longitudinal studies. 

This paper encompasses a re-derived probability density function
for the multivariate $q$Gaussian distribution based on Tsallis entropy
maximization. Statistical modeling based on the derived density paves the
way for the analysis of correlated data and heterogeneity and enables
variable selection. Furthermore, we have developed an innovative framework
capable of converting any numerical method, originally designed to
identify equilibria in flows, into a tool for tackling composite
optimization problems that are prevalent in statistical sparse learning.
By applying this framework to the Hager-Zhang conjugate gradient algorithm,
we have crafted an effective and stable algorithm tailored to the
challenges of sparse statistical learning. Given the abundance of
methods for numerically identifying equilibria for globally Lipschitz
flows, our approach significantly broadens the arsenal of techniques
available to address sparse statistical learning optimization challenges. 

In conclusion, our research positions the $q$Gaussian distribution,
underpinned by maximizing Tsallis entropy, as a robust and adaptable
alternative to Gaussian-based methodologies in statistical sparse
learning on correlated data. This breakthrough not only confronts
the traditional limitation of Gaussian assumptions, but also paves
the way for expanded investigation into Tsallis entropy-maximizing
distributions, particularly within the domain of biostatistics and
allied disciplines. 

Future directions for research include the exploration of the log-linear
model through the lens of Tsallis entropy maximization, akin to approaches
previously based on Shannon's entropy. Moreover, the study of the
phenomenon called \emph{volatility smirk} in financial return data may benefit
from employing the log-$q$Gaussian distribution ---- a transformation
of the $q$Gaussian distribution, which can provide deeper insights
into the nuances of financial markets. Additionally, in the field
of statistical computing research, our framework that transforms numerical
methods for identifying flow equilibria into algorithms for solving
composite optimization problems opens numerous avenues for future
research, especially in a sparse learning context.

\section{Acknowledgments}
This work was supported by the ISM Scholarship for Outstanding PhD Candidates awarded to K. Yang, the NSERC Discovery Grant to C. Greenwood (Grant Number: RGPIN-2019-04482), the NSERC Discovery Grant to M. Asgharian (Grant Number: RGPIN-2024-05640), and the CANSSI Collaborative Research Team Grant to C. Greenwood and G. Cohen Freue.

\newpage{}

\printbibliography

\end{document}